\newtheorem{theorem}{Theorem}[section]
\newtheorem{lemma}[theorem]{Lemma}
\newtheorem{proposition}[theorem]{Proposition}
\newtheorem{corollary}[theorem]{Corollary}
\theoremstyle{definition}
\newtheorem{definition}[theorem]{Definition}
\newtheorem{example}[theorem]{Example}
\theoremstyle{remark}
\newtheorem{remark}[theorem]{Remark}
\numberwithin{equation}{section}
\newcommand{\Z}{\mathbb{Z}}
\newcommand{\C}{\mathfrak{C}}
\renewcommand{\L}{\mathcal{L}}
\renewcommand{\a}{\mathsf{a}}%\a{'}{b}
\newcommand{\vlk}{\mathrm{vlk}}
\newcommand{\sign}{\operatorname{sign}}
\newcommand{\Hom}{\mathrm{Hom}}
\newcommand{\ang}[1]{\langle #1 \rangle}
\newcommand{\asc}{\mathrm{asc}}
\newcommand{\des}{\mathrm{des}}
\newcommand{\Arf}{\operatorname{Arf}}
\title[]{On the determinant of checkerboard colorable virtual knots}
\author{Tomoaki Hatano}
\address{}
\email{}
\author{Yuta Nozaki}
\address{
Faculty of Environment and Information Sciences, Yokohama National University \\
79-7 Tokiwadai, Hodogaya-ku, Yokohama, 240-8501 \\
Japan\vspace{-0.6em}}
\address{
WPI-SKCM$^2$, Hiroshima University \\
1-3-1 Kagamiyama, Higashi-Hiroshima, Hiroshima 739-8526 \\
Japan}
\email{nozaki-yuta-vn@ynu.ac.jp}
\subjclass[2020]{Primary 57K12, Secondary 57K10, 57K16}
\keywords{Virtual knot, determinant, Alexander numbering, Gordon-Litherland form, Conway combination}
\begin{document}
\maketitle

\begin{abstract}
For classical knots, Murasugi showed that the determinant modulo $8$ is classified by the Arf invariant.
Boden and Karimi introduced a determinant for checkerboard colorable virtual knots.
We prove that this determinant modulo $8$ is classified by the coefficient of $z^2$ in the ascending polynomial, an extension of the Conway polynomial for classical knots.
\end{abstract}

%\setcounter{tocdepth}{1}
%\tableofcontents

%%%%%%%%%
\section{Introduction}
\label{sec:Introduction}

For a knot in $S^3$, let $\Delta_K(t)$ denote the Conway-normalized Alexander polynomial, that is, $\Delta_K(t)=\det(t^{1/2}V-t^{-1/2}V^T)$ for a Seifert matrix $V$ of $K$.
The special value $\Delta_K(-1)$ is known to be an odd integer, and its absolute value $\det K$ is called the \emph{determinant} of $K$.
Here $\Delta_K(t)$ is related to the Conway polynomial $\nabla_K(z)$ of $K$ via $\Delta_K(t)=\nabla_K(t^{-1/2}-t^{1/2})$.
Let $\Arf K\in \Z/2\Z$ denote the \emph{Arf invariant} of $K$.
Murasugi~\cite{Mur69} established the following relation, which is now well known:
\begin{align}
\det K \equiv
\begin{cases}
 \pm 1 \mod 8 & \text{if $\Arf K=0$,}\\
 \pm 3 \mod 8 & \text{if $\Arf K=1$.}
\end{cases}
\label{eq:classical}
\end{align}

\begin{figure}[h]
 \centering \includegraphics[width=0.8\textwidth]{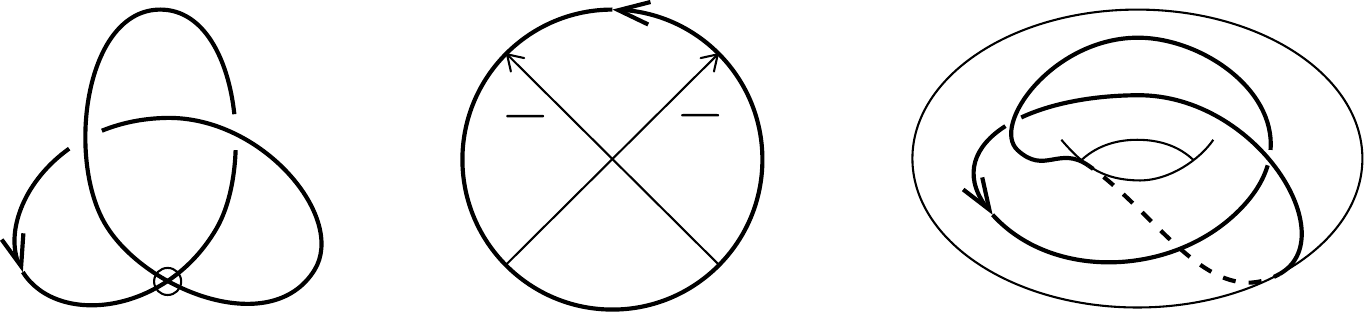}
 \caption{A virtual knot diagram, the corresponding Gauss diagram, and (projection of) a knot in a thickened torus.}
 \label{fig:virtual_trefoil}
\end{figure}

In this paper, we extend the above classical result to a certain class of virtual knots.
A virtual knot is a generalization of classical knots, defined by a virtual knot diagram containing both real and virtual crossings, or by a Gauss diagram as illustrated in Figure~\ref{fig:virtual_trefoil}.
It is also defined to be a certain equivalence class of a knot in $\Sigma\times[0,1]$, where $\Sigma$ is an orientable closed surface.
For its precise definition and related topics, we refer the reader to \cite{Kau99}, \cite{BGHNW17}, \cite{BoKa22}, and \cite{CDM12}. 

Boden, Chrisman, and Karimi~\cite{BCK22} introduced a determinant of a pair $(L,F)$, where $L$ is a link in $\Sigma\times[0,1]$ with $[L]=0$ in $H_1(\Sigma\times[0,1];\Z/2\Z)$ and $F$ is a spanning surface for $L$.
Their determinant can be applied to \emph{checkerboard colorable virtual links} introduced by Kamada~\cite{Kam02} since they admit spanning surfaces.
Boden and Karimi~\cite{BoKa22} defined another determinant for these links, which is independent of the choice of a spanning surface.
Now, it is natural to expect that the Arf invariant for classical knots can be extended to checkerboard colorable virtual knots so that the extension and the determinant in \cite{BoKa22} fit into the equality \eqref{eq:classical}.

When a knot $K$ satisfies $[K]=0$ in $\Z$-coefficients and $F$ is a Seifert surface of $K$, the Arf invariant is defined for $(K,F)$ and used as an obstruction to virtual concordance of Seifert surfaces by Chrisman, Mukherjee~\cite{ChMu23} and Boden, Karimi~\cite{BoKa24}.
The Arf invariant for $(K,F)$, however, does not satisfy \eqref{eq:classical} above (see Example~\ref{ex:Green}).

We here focus on the fact that $\Arf K = v_2(K)\bmod 2$ for classical knots $K$, where $v_2(K)$ denotes the coefficient of $z^2$ in the Conway polynomial $\nabla_K(z)$.
For a Gauss diagram $G$ of a virtual knot with a basepoint $b$ on the circle disjoint from chords, let $\nabla_\asc(G_b)(z) \in \Z[z]$ (resp.\ $\nabla_\des(G_b)(z)$) denote the ascending (resp.\ descending) polynomial introduced by Chmutov, Khoury, and Rossi~\cite{CKR09}.
For classical knots, $\nabla_\asc(G_b)(z)$ and $\nabla_\des(G_b)(z)$ do not depend on the choice of a Gauss diagram and basepoint, and they coincide with the Conway polynomial.

We write $v_{2,1}(G_b)$ (resp.\ $v_{2,2}(G_b)$) for the coefficient of $z^2$ in $\nabla_\asc(G_b)(z)$ (resp.\ $\nabla_\des(G_b)(z)$) and show that they behave well for a certain class of virtual knots containing checkerboard colorable virtual knots.

\begin{theorem}
\label{thm:main}
Let $p$ be a non-negative integer and let $G$ be a Gauss diagram of a mod $p$ almost classical knot.
Then, for $j=1,2$, $v_{2,j}(G_b) \bmod p$ do not depend on the choice of a basepoint $b$.
Moreover, $v_{2,1}(G_b)\equiv v_{2,2}(G_b) \bmod p$.
\end{theorem}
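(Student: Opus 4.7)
The plan is to derive a combinatorial formula for $v_{2,j}(G_b)$ as a signed sum over pairs of chords of $G$, and then analyze that sum under basepoint translation and under the ascending/descending involution, in each case using the mod $p$ Alexander numbering to force the cancellations. Concretely, I would first unpack the Chmutov--Khoury--Rossi state sum definition of $\nabla_\asc(G_b)(z)$ and $\nabla_\des(G_b)(z)$ and extract the coefficient of $z^2$. After gathering terms, both $v_{2,1}(G_b)$ and $v_{2,2}(G_b)$ should reduce to a signed count over ordered pairs of chords $(c_1,c_2)$, with the sign determined by $\sgn(c_1)\sgn(c_2)$, by whether $c_1$ and $c_2$ are linked in $G$, and by the cyclic position of $b$ among the four endpoints of $c_1,c_2$ on the circle. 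For a classical Gauss diagram this should specialize to the Polyak--Viro formula for $v_2$, which provides a useful sanity check.

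To establish basepoint independence mod $p$, I would slide $b$ past a single chord endpoint $e$ and compute the change $v_{2,j}(G_b)-v_{2,j}(G_{b'})$. Only those pairs $(c_1,c_2)$ with an endpoint at $e$ contribute; after grouping these contributions by $e$, the total change becomes a linear combination of the Alexander numbers of the two arcs adjacent to $e$. The mod $p$ almost classical hypothesis says precisely that such a numbering exists in $\Z/p\Z$ and satisfies the required relation at each endpoint, which forces the total change to vanish in $\Z/p\Z$.

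For the congruence $v_{2,1}(G_b)\equiv v_{2,2}(G_b)\pmod p$, I would exploit the natural symmetry between the ascending and descending constructions: swapping $\asc\leftrightarrow\des$ reverses the over/under convention at each chord, which in the two-chord formula of Step~1 amounts to a controlled sign change depending on which of the two chords is encountered first when traversing $G$ from $b$. The difference $v_{2,1}(G_b)-v_{2,2}(G_b)$ then collapses to a sum whose coefficients are again expressible through the mod $p$ Alexander numbers at chord endpoints, and the mod $p$ almost classical condition once more forces the sum to vanish in $\Z/p\Z$. A convenient alternative form of this step would be to combine basepoint independence (just proved) with a comparison at a particular conveniently chosen basepoint, where $\nabla_\asc$ and $\nabla_\des$ are related by an obvious symmetry.

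The principal obstacle I expect is the sign bookkeeping in Step~1: distilling the CKR state sum into a two-chord formula whose sign function $\epsilon(c_1,c_2;b)$ is transparent enough that its behavior under basepoint translation and under the $\asc\leftrightarrow\des$ involution can be read off directly in terms of Alexander numbers. Once that formula is secured, both mod $p$ cancellations should reduce to routine case analysis, but any misidentification of signs at this initial stage would propagate through and invalidate the rest of the argument.
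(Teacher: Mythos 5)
Your first two steps are sound and, for basepoint independence, essentially reproduce the paper's own argument (Theorem~\ref{thm:basepoint}): since $\C_2$ and $\C'_2$ each consist of a single two-arrow diagram, $v_{2,j}(G_b)$ is already a signed count over linked pairs of arrows, and sliding $b$ past one endpoint of an arrow $\a$ changes that count by exactly $\pm I(\a)$, which vanishes mod $p$ by Lemma~\ref{lem:index}. (Your description of the change as ``a linear combination of the Alexander numbers of the two arcs adjacent to $e$'' is not quite the right quantity --- it is the index of $\a$, a signed count of the arrows linked with $\a$ --- but you would discover this in carrying out the computation.)

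The genuine gap is in your third step. Write $\eta_b(c)=\pm 1$ according to whether the first endpoint of the arrow $c$ met from $b$ is its head or its tail. Unwinding the definitions, a linked pair $\{c_1,c_2\}$ contributes $\sgn(c_1)\sgn(c_2)$ to $\ang{\C_2,G_b}$ exactly when $\eta_b(c_1)=\eta_b(c_2)=+1$ and to $\ang{\C'_2,G_b}$ exactly when both equal $-1$, so
\[
\ang{\C_2,G_b}-\ang{\C'_2,G_b}
=\sum_{\{c_1,c_2\}\ \mathrm{linked}}\sgn(c_1)\sgn(c_2)\,\frac{\eta_b(c_1)+\eta_b(c_2)}{2}
=\frac12\sum_{c}\sgn(c)\,\eta_b(c)\,L(c),
\]
where $L(c)$ denotes the sum of the signs of all arrows linked with $c$. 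In the notation of Definition~\ref{def:index}, the hypothesis controls $\sgn(c)I(c)=(r_+-r_-)-(l_+-l_-)$, whereas $L(c)=(r_+-r_-)+(l_+-l_-)$; these are independent linear functionals, and $I(c)\equiv 0\bmod p$ only forces $L(c)\equiv 2(r_+-r_-)$, which need not vanish mod $p$. So the difference is not a termwise combination of indices, the cancellation (if any) is global, and the ``routine case analysis'' you defer to does not reach it. Your fallback --- combining basepoint independence with a specially chosen basepoint at which ascending and descending are related by an obvious symmetry --- also fails, since no such basepoint exists for a general Gauss diagram. The paper instead proves $\ang{\C_2,G}\equiv\ang{\C'_2,G}\bmod p$ (Theorem~\ref{thm:C_prime}) by induction on the number of arrows and the warping degree, using the skein relations of Lemma~\ref{lem:skein_even}: a crossing change at an arrow $\a$ violating descendingness lowers the warping degree at the cost of a smoothed term, the ascending/descending discrepancy of that smoothed term is exactly $\pm I(\a)$, and the induction bottoms out at descending diagrams, where both pairings vanish (Lemma~\ref{lem:warp_zero}). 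To salvage your direct route you would need to prove that the displayed sum lies in the subgroup generated by the $I(c)$, which is essentially what that induction accomplishes.
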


This theorem allows us to define $v_2(K)\in \Z/2\Z$ for checkerboard colorable virtual knots $K$ by $v_2(K)=v_{2,1}(G_b)\equiv v_{2,2}(G_b) \bmod 2$.
The purpose of this paper is to prove the following relation between $\det K$ in \cite{BoKa22} and our invariant $v_2(K)$.

\begin{corollary}
\label{cor:det}
Let $K$ be a checkerboard colorable virtual knot.
Then,
\[
\det K \equiv
\begin{cases}
 \pm 1 \mod 8 & \text{if $v_2(K)=0$,}\\
 \pm 3 \mod 8 & \text{if $v_2(K)=1$.}
\end{cases}
\]
\end{corollary}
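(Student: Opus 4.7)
The plan is to follow the classical derivation of \eqref{eq:classical} and reduce the corollary to Theorem~\ref{thm:main}. For a classical knot $K$, setting $z = t^{1/2}-t^{-1/2}$ so that $z|_{t=-1} = 2i$ and $(2i)^{2k} = (-4)^k \equiv 0 \pmod 8$ for $k \ge 2$, one has
\[
\det K = |\Delta_K(-1)| = |\nabla_K(2i)| \equiv |1 - 4\,v_2(K)| \pmod 8,
\]
which immediately yields the classical dichotomy. I would mimic this computation using $\nabla_\asc(G_b)(z)$ in place of $\nabla_K(z)$, the point being that Theorem~\ref{thm:main} guarantees the $z^2$-coefficient $v_{2,1}(G_b)$ of this polynomial is well-defined modulo $2$ on checkerboard colorable virtual knots.

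The key bridge to establish is that, for a checkerboard colorable virtual knot $K$ with Gauss diagram $G$ and any basepoint $b$,
\[
\nabla_\asc(G_b)(2i) \equiv \pm\,\det K \pmod 8.
\]
I would approach this through Boden-Karimi's formulation $\det K = |\det GL_F|$ for a spanning surface $F$, whose existence is equivalent to checkerboard colorability. From $GL_F$ one constructs an Alexander-type polynomial $\Delta_{K,F}(t)$ with $\Delta_{K,F}(-1) = \pm\det GL_F = \pm\,\det K$. Passing to the Conway variable $z$, only the constant and $z^2$-coefficients of $\Delta_{K,F}$ survive modulo $8$. The central task is then to match this $z^2$-coefficient with $v_{2,1}(G_b)$ modulo $2$, which I would handle via a Conway-combination/state-sum expansion over chord subsets of $G$, using the mod $2$ Alexander numbering furnished by checkerboard colorability.

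The main obstacle is precisely this last identification: showing that the $z^2$-coefficient extracted from the algebraic invariant $\Delta_{K,F}$ agrees modulo $2$ with the combinatorially defined $v_{2,1}(G_b)$. The classical analog of this matching is standard, but the virtual version requires care because individual terms in the Gauss-diagram expansion depend on the basepoint even though their mod $2$ sum does not. Once the identification is in place, the proof concludes with the routine expansion
\[
\nabla_\asc(G_b)(2i) \equiv 1 - 4\,v_{2,1}(G_b) \pmod 8
\]
combined with $v_{2,1}(G_b) \equiv v_2(K) \pmod 2$ from Theorem~\ref{thm:main}, yielding $\det K \equiv \pm(1 - 4\,v_2(K)) \pmod 8$. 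Taking absolute values then gives $\det K \equiv \pm 1 \pmod 8$ when $v_2(K) = 0$ and $\det K \equiv \pm 3 \pmod 8$ when $v_2(K) = 1$, as claimed.
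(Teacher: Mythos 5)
Your final arithmetic and the reduction to Theorem~\ref{thm:main} match the paper, and the discrepancy between your $1-4v_2(K)$ (from evaluating at $z=2i$) and the paper's $1+4v_2(K)$ (from evaluating at $z=2$) is harmless mod $8$. However, the ``key bridge'' $\nabla_\asc(G_b)(2i)\equiv\pm\det K\pmod 8$, which you correctly flag as the main obstacle, is precisely the content of the paper's Theorem~\ref{thm:det}, and you have not supplied a proof of it; what you offer is a plan whose central step (matching the $z^2$-coefficient of an Alexander-type polynomial $\Delta_{K,F}$ against $v_{2,1}(G_b)$ mod $2$ via a state-sum expansion) is exactly the part that does not carry over from the classical case. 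Concretely, two things break. First, the Gordon--Litherland form of a (generally nonorientable) spanning surface $F$ does not factor as $V+V^T$ for a Seifert matrix $V$, so there is no Conway normalization of $\Delta_{K,F}$ and no well-defined passage to the variable $z$; in particular your assertions that only the constant and $z^2$ terms survive mod $8$ and that the constant term is $\pm 1$ have no a priori meaning for $\Delta_{K,F}$. Second, even granting such an expansion, identifying its $z^2$-coefficient with the Gauss-diagram count $v_{2,1}(G_b)$ is not the standard classical matching, because the ascending polynomial is not a knot invariant outside the mod $p$ almost classical setting and its individual terms depend on the basepoint.

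The paper fills this gap by an entirely skein-theoretic argument: Theorem~\ref{thm:det_skein} gives $\det S_+-\det S_-=2\det S_0$ for mock Seifert matrices of the spanning surfaces in Figure~\ref{fig:spanning_surface}, Lemmas~\ref{lem:skein_even} and~\ref{lem:skein_odd} give the parallel relations for the pairings $\ang{\C_n,\cdot}$, and Theorem~\ref{thm:det} is then proved by a double induction (on the number of real crossings and on the warping degree, with an auxiliary mod $4$ statement for the two-component links obtained by smoothing), the base cases being supplied by Lemmas~\ref{lem:warp_zero}, \ref{lem:smoothing}, \ref{lem:det1}, and~\ref{lem:L1onL2}. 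To complete your proposal you would need either to carry out this kind of induction for your bridge or to find a genuine substitute for the missing Conway normalization; as written, the corollary has been reduced to an unproved statement that carries essentially all of the difficulty.
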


Note here that a similar result for almost classical knots is mentioned in \cite[Section~4]{BoKa24}, but the definition of the determinant is different from \cite{BoKa22} and ours.
This difference arises from the difference in Gordon-Litherland linking forms (see a discussion after \cite[Definition~3.1]{BoKa23}).

By Corollary~\ref{cor:det}, while the Arf invariant derived from Seifert matrices coincides with the coefficient of $z^2$ in the Conway polynomial modulo $2$ in classical knot theory, they play different roles in virtual knot theory.
The former is an obstruction to concordance, and the latter controls the determinant modulo $8$.
Finally, this paper is based on the first author's master thesis at Hiroshima University, written in Japanese.

\subsection*{Acknowledgments}
The authors are grateful to Kodai Wada for his careful reading of the manuscript, and to Noboru Ito and Yuka Kotorii for their helpful comments.
They also wish to express their thanks to the anonymous referee for valuable comments.
This study was supported in part by JSPS KAKENHI Grant Numbers JP20K14317 and JP23K12974.

%%%%%%%
\section{Preliminaries}
\label{section:Preliminaries}

\subsection{mod $p$ almost classical knots}
We review basic definitions concerning virtual links according to \cite{BGHNW17}.
A \emph{virtual link} is an equivalence class of virtual link diagrams, where two diagrams $D$ and $D'$ are equivalent if there exists a finite sequence of Reidemeister moves and virtual moves from $D$ to $D'$.
Also, $D$ and $D'$ are said to be \emph{welded equivalent} if they are related by the Reidemeister moves, virtual moves, and forbidden overpass move in \cite[Figure~1]{BGHNW17}.
A \emph{short arc} (resp.\ \emph{long arc}) of a virtual link diagram is an arc connecting two consecutive crossings (resp.\ two under-crossings).
For instance, the diagram on the left of Figure~\ref{fig:virtual_trefoil} has six short arcs and two long arcs.

We recall the following definition from \cite[Section~5]{BGHNW17}.

\begin{definition}
\label{def:almost_classical}
Let $p$ be a non-negative integer.
A diagram $D$ of oriented virtual link is said to be \emph{mod $p$ Alexander numberable} if there exists an assignment of integers to the short arcs of $D$ such that the four integers $\lambda_a, \lambda_b, \lambda_c, \lambda_d$ around a crossing as illustrated in Figure~\ref{fig:Alexander_numbering} satisfy
\begin{itemize}
 \item $\lambda_b\equiv \lambda_c$ and $\lambda_a\equiv \lambda_d\equiv \lambda_b+1 \bmod p$ if the crossing is real,
 \item $\lambda_a= \lambda_c$ and $\lambda_b= \lambda_d$ if the crossing is virtual.
\end{itemize}
Moreover, an oriented virtual link $L$ is said to be \emph{mod $p$ almost classical} if $L$ admits a mod $p$ Alexander numberable diagram.
As special cases, a mod $2$ almost classical link is said to be \emph{checkerboard colorable} in \cite{Kam02}, and mod $0$ almost classical link is said to be \emph{almost classical} in \cite{SiWi06}.

\end{definition}

\begin{figure}[h]
 \centering \includegraphics[width=0.8\textwidth]{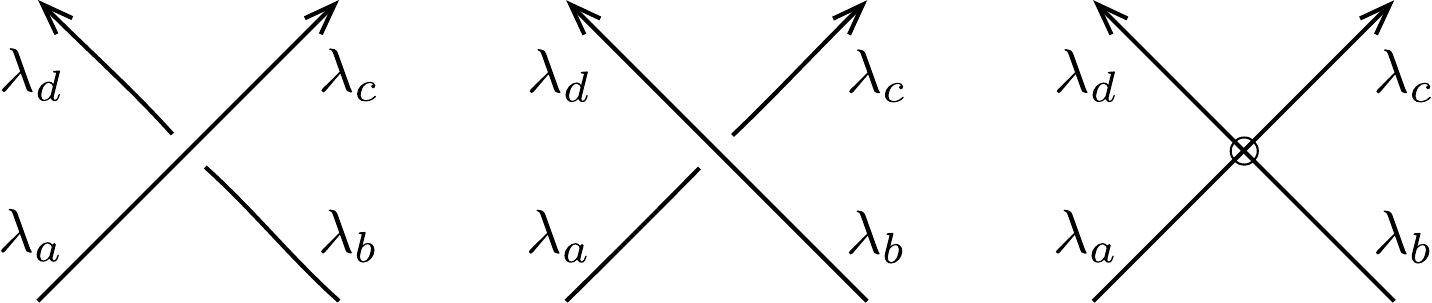}
 \caption{Colorings of short arcs around positive/negative crossings and virtual crossing.}
 \label{fig:Alexander_numbering}
\end{figure}

Being mod $p$ almost classical is characterized by the first homology group of a thickened surface as stated in the next theorem, which is shown in much the same way as \cite[Theorem~6.1]{BGHNW17}.
See also an argument written before Theorem~5.2 in \cite{BGHNW17}.

\begin{theorem}
\label{thm:Carter_surface}
For an oriented virtual link $L$, the following are equivalent.
\begin{enumerate}[label=\textup{(\alph*)}]
\item $L$ is mod $p$ almost classical.
\item $[L] \in H(\Sigma\times[0,1];\Z/p\Z)$ is trivial, where $\Sigma$ is the Carter surface of a diagram of $L$.
\end{enumerate}
\end{theorem}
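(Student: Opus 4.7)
The plan is to mirror the proof of \cite[Theorem~6.1]{BGHNW17}, which treats the case $p=0$, replacing integer coefficients by $\Z/p\Z$ coefficients throughout. I would first represent $L$ by a diagram $D$ on its Carter surface $\Sigma$, so that $L\subset \Sigma\times[0,1]$ and the projection $\pi(L)\subset\Sigma$ has only real crossings. Since the inclusion $\Sigma\hookrightarrow\Sigma\times[0,1]$ is a homotopy equivalence, $[L]\in H_1(\Sigma\times[0,1];\Z/p\Z)$ corresponds to $[\pi(L)]\in H_1(\Sigma;\Z/p\Z)$, and by Poincar\'e duality on the closed oriented surface $\Sigma$ this class vanishes if and only if $\gamma\cdot\pi(L)\equiv 0\bmod p$ for every $1$-cycle $\gamma$ on $\Sigma$.

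For (a) $\Rightarrow$ (b), I would promote a mod $p$ Alexander numbering of $D$, defined a priori on short arcs, to a $\Z/p\Z$-valued function $\lambda$ on the regions of $\Sigma\setminus \pi(L)$. The compatibility conditions at real crossings translate into the statement that $\lambda$ changes by $\pm 1\bmod p$ across every arc of $\pi(L)$, with sign prescribed by the orientation of $L$. For any closed curve $\gamma$ in $\Sigma$ transverse to $\pi(L)$, the signed intersection $\gamma\cdot\pi(L)$ then equals the total jump of $\lambda$ along $\gamma$, which must vanish mod $p$ since $\lambda$ is single-valued. Hence $[\pi(L)]=0$ in $H_1(\Sigma;\Z/p\Z)$, and therefore $[L]=0$ in $H_1(\Sigma\times[0,1];\Z/p\Z)$.

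For the converse (b) $\Rightarrow$ (a), I would fix a base region $R_0$ of $\Sigma\setminus\pi(L)$ and, for any other region $R$, define $\lambda(R)\in\Z/p\Z$ to be the mod-$p$ signed intersection number with $\pi(L)$ of any path from $R_0$ to $R$. Well-definedness follows precisely from the hypothesis that closed cycles intersect $\pi(L)$ trivially mod $p$, since any two choices of path differ by a $1$-cycle on $\Sigma$. Reading $\lambda$ back onto the short arcs of $D$ with the sign convention dictated by Figure~\ref{fig:Alexander_numbering} then produces a mod $p$ Alexander numbering of $D$, showing that $L$ is mod $p$ almost classical.

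The main obstacle, though essentially bookkeeping, will be to carry out the conversion between numberings of short arcs and functions on regions consistently, and to verify that the local jump rule at positive and negative real crossings matches the $\pm 1$ convention coming from the intersection-number interpretation. This is an entirely local diagrammatic check that runs exactly as in the integral argument of \cite[Theorem~6.1]{BGHNW17}, with all congruences simply taken mod $p$.
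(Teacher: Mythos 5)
Your proposal is correct and is essentially the argument the paper intends: the paper gives no written proof, deferring to \cite[Theorem~6.1]{BGHNW17} (and the discussion before Theorem~5.2 there), and your adaptation---translating the mod $p$ Alexander numbering on short arcs into a $\Z/p\Z$-valued region function on the Carter surface and identifying its coboundary condition with the vanishing of $[\pi(L)]$ via the mod $p$ intersection pairing---is exactly that proof with $\Z$ replaced by $\Z/p\Z$.
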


In particular, a checkerboard colorable virtual link $L$ actually admits a checkerboard coloring in the sense of \cite[Section~2]{Kam02} (or \cite[Introduction]{ChPa07}), and it has a spanning surface in $\Sigma\times [0,1]$.

\begin{example}
The virtual knot in Figure~\ref{fig:virtual_trefoil} is not checkerboard colorable.
Indeed, one can check that the diagram in Figure~\ref{fig:virtual_trefoil} is not mod $2$ Alexander numberable and the crossing number of this diagram is minimal, and thus \cite[Theorem~8.3]{BGHNW17} implies that it is not checkerboard colorable.

In Green's table~\cite{Green}, $K=4.90$ is checkerboard colorable as illustrated in Figure~\ref{fig:4_90}.
On the other hand, one can see that it is not almost classical.
\end{example}

Recall here that every oriented virtual link is represented by a Gauss diagram, which is a chord diagram with oriented chords (called arrows) and with signs $+1$ or $-1$ assigned to each chord.
See, for example, \cite{CKR09}.
Throughout this paper, orientations of circles of Gauss diagrams (and arrow diagrams in Section~\ref{sec:ascending_descending}) are assumed to be counterclockwise.

We recall from \cite[Section~1]{BGHNW17} the definition of the index of a chord.
See also \cite{SaTa14}.

\begin{definition}
\label{def:index}
Let $G$ be a Gauss diagram of a virtual knot.
The \emph{index} $I(c)\in \Z$ of a chord $c$ of $G$ with sign $\varepsilon$ is defined by $I(c)=\varepsilon(r_{+} -r_{-} +l_{-} -l_{+})$, where $r_\pm$ (resp.\ $l_\pm$) denote the numbers of chords with sign $\pm$ intersecting $c$ from left to right (resp.\ from right to left) when the head of $c$ is oriented up.
\end{definition}

The following lemma is stated after Definition~5.1 in \cite{BGHNW17}.

\begin{lemma}
\label{lem:index}
Let $G$ be a Gauss diagram of an oriented virtual knot diagram $D$.
Then $D$ is mod $p$ Alexander numberable if and only of $I(c)\equiv 0\bmod p$ for any chord $c$ of $G$.
\end{lemma}

\begin{figure}[h]
 \centering \includegraphics[width=0.8\textwidth]{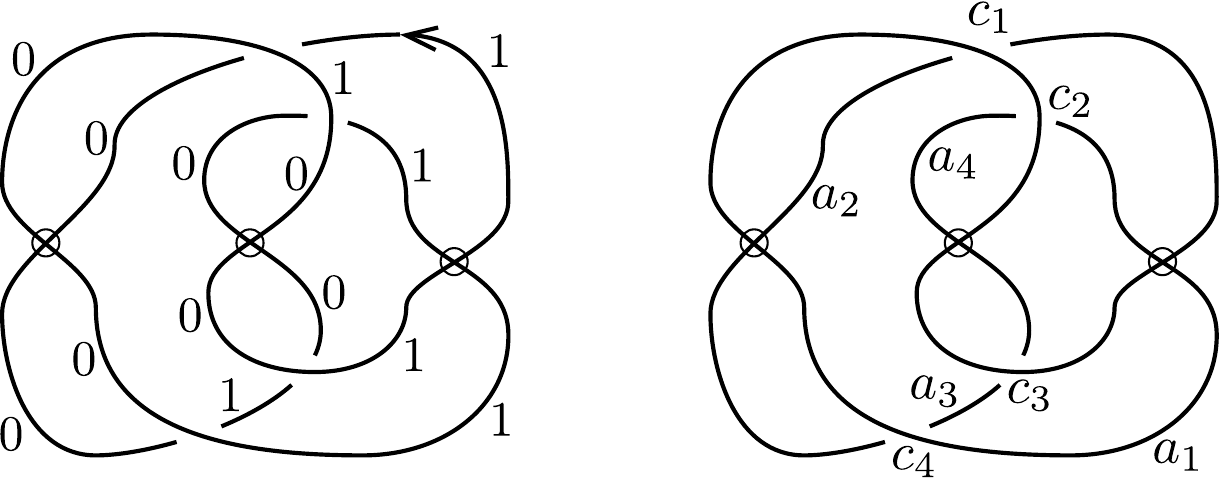}
 \caption{A mod $2$ Alexander numbering of a diagram of $4.90$ and labels of crossings and long arcs.}
 \label{fig:4_90}
\end{figure}

%%%
\subsection{Determinant for checkerboard colorable links}
We recall from \cite[Section~3]{BoKa22} the definition of the coloring matrix.

\begin{definition}
Let $D$ be a checkerboard colorable diagram with real crossings $c_1,\dots,c_n$ and long arcs $a_1,\dots,a_m$.
The \emph{coloring matrix} $B(D)=(b_{ij})$ is the $n\times m$ matrix such that $b_{ij}=1$ if $a_j$ is both the over-crossing arc and one of the under-crossing arcs at $c_i$, otherwise
\[
b_{ij}=
\begin{cases}
2 & \text{if $a_j$ is the over-crossing arc at $c_i$,}\\
-1 & \text{if $a_j$ is one of the under-crossing arcs at $c_i$,}\\
0 & \text{otherwise.}
\end{cases}
\]
\end{definition}

Now, we can define a determinant, which is an essential ingredient of this paper.

\begin{definition}
\label{def:determinant}
The \emph{determinant} $\det L$ of checkerboard colorable virtual link $L$ is defined to be the absolute value of the determinant of an $(n-1)\times(n-1)$ minor of $B(D)$, which is well-defined due to \cite[Proposition~3.1]{BoKa22}.
\end{definition}

The determinant is actually an invariant of welded links since the coloring matrix is invariant under the forbidden overpass move in \cite[Figure~1]{BGHNW17}.
We use this fact in the proof of Lemma~\ref{lem:det1}.

\begin{example}
For the labels of the diagram of $K=4.90$ on the right of Figure~\ref{fig:4_90}, we have
\[
B(D)=
\begin{pmatrix}
1 & -1 & 0 & 0 \\
1 & 0 & 0 & -1 \\
2 & 0 & -1 & -1 \\
2 & -1 & -1 & 0 \\
\end{pmatrix}.
\]
Therefore, $\det K =1$. 
\end{example}

To show a kind of skein relation for $\det L$ in the next subsection, we give another interpretation of $\det L$ in terms of a mock Seifert matrix introduced in \cite[Definition~3.5]{BoKa23}.
Let $\Sigma$ be an orientable closed surface.
For two disjoint oriented knots $K_1$ and $K_2$ in $\Sigma\times[0,1]$, we define the \emph{virtual linking number} $\vlk(K_1,K_2) \in \Z$ by the identity
\[
[K_2]=\vlk(K_1,K_2)[\mu] \in H_1(\Sigma\times[0,1]\setminus K_1, \Sigma\times\{1\};\Z)\cong \Z.
\]
We recall the Gordon-Litherland form $\L_F$ from \cite{BoKa23}, where $F$ is a compact connected (unoriented) surface in the interior of $\Sigma\times[0,1]$.
Let $\widetilde{F}$ be the boundary of a neighborhood of $F$ and $\pi\colon \widetilde{F}\to F$ be the induced double cover.
One can define a transfer map $\tau\colon H_1(F;\Z)\to H_1(\widetilde{F};\Z)$, and then the Gordon-Litherland form $\L_F\colon H_1(F)\times H_1(F)\to \Z$ is defined by $\L_F(\alpha,\beta) = \vlk(\tau(\alpha),\beta)$.

\begin{definition}
Let $L$ be a link in $\Sigma\times[0,1]$ with $[L]=0\in H_1(\Sigma\times[0,1];\Z/2\Z)$ and let $F$ be a spanning surface for $L$, that is, $F$ is (not necessarily orientable) connected compact surface whose boundary is $L$.
A matrix for the Gordon-Litherland form $\L_F$ associated with $F$ is called a \emph{mock Seifert matrix} for $L$.
\end{definition}

Let $L$ be a checkerboard colorable virtual link, $D$ a diagram of $L$, and $\Sigma$ the Carter surface of $D$.
We write for $X$ the quotient space obtained from $\Sigma\times[0,1]$ by identifying $\Sigma\times\{1\}$ with a point.
Let $X_2$ be the double cover of $X$ branched over $L$ and $X_\infty$ the infinite cyclic cover of $X\setminus L$.
It is shown in \cite[Section~3]{BoKa22} that we can compute the first elementary ideal $\mathcal{E}_1$ of the Alexander module $H_1(X_\infty;\Z)$ over the ring $\Z[t,t^{-1}]$ from a matrix $A(D)$ obtained by Fox's free derivative.
As mentioned before Proposition~3.1 in \cite{BoKa22}, the coloring matrix $B(D)$ is obtained from $A(D)$ by substituting $t=-1$ (up to sign for any given row).
Therefore, one can compute the first elementary ideal of $H_1(X_2;\Z)$ over the ring $\Z[-1]=\Z$ from $B(D)$.
On the other hand, it is shown in \cite[Theorem~3.9]{BoKa23} that a mock Seifert matrix for $L$ is a presentation matrix of $H_1(X_2;\Z)$.
Therefore, we obtain the following consequence (see \cite[Remark~3.11]{BoKa23}).

\begin{proposition}
Let $S$ be a mock Seifert matrix for $L$.
Then, $\det L = |{\det S}|$ holds.
\end{proposition}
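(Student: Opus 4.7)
My plan is to identify both $\det L$ and $|{\det S}|$ with the absolute value of a generator of the $0$th elementary ideal $\mathcal{E}_0(H_1(X_2;\Z))\subseteq \Z$, and then conclude by invoking the invariance of elementary ideals under change of presentation. This reduces the proposition to checking that each of the two matrices in question is a presentation matrix of $H_1(X_2;\Z)$.

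For the mock Seifert side, I would simply quote \cite[Theorem~3.9]{BoKa23}, recalled in the paragraph preceding the statement: the matrix $S$ is a square presentation matrix of $H_1(X_2;\Z)$, so $\det S$ generates $\mathcal{E}_0(H_1(X_2;\Z))$. For the coloring side, I would follow the discussion immediately above the proposition. The Alexander-type matrix $A(D)$ produced by Fox's free derivative presents the Alexander module $H_1(X_\infty;\Z)$ over $\Z[t,t^{-1}]$ after deletion of one redundant row. Specializing $t=-1$ (equivalently, tensoring with $\Z[t,t^{-1}]/(t+1)\cong \Z$) converts a presentation of $H_1(X_\infty;\Z)$ into one of $H_1(X_2;\Z)$; after the sign adjustments discussed before \cite[Proposition~3.1]{BoKa22}, the specialized matrix is exactly $B(D)$ with one row removed. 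The $(n-1)\times(n-1)$ minors of $B(D)$ therefore generate $\mathcal{E}_0(H_1(X_2;\Z))$, and Definition~\ref{def:determinant} identifies their common absolute value with $\det L$.

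Combining the two descriptions, the principal ideal $\mathcal{E}_0(H_1(X_2;\Z))\subseteq \Z$ is generated both by $\det S$ (a single top-order minor, since $S$ is square) and by the $(n-1)\times(n-1)$ minors of $B(D)$, so the two integers agree up to sign. The main conceptual point is that the rank deficiency of $B(D)$ (which forces one to pass to an $(n-1)\times(n-1)$ minor) is precisely the redundancy among the Fox-calculus relations that survives the specialization $t=-1$, so no information is lost; once this is granted, the conclusion is a routine comparison of elementary ideals rather than a calculation, and no further case analysis in terms of spanning surfaces is required.
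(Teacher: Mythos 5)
Your proposal is correct and follows essentially the same route as the paper, which derives the proposition from the discussion immediately preceding it: the coloring matrix $B(D)$ (specialized from the Fox matrix at $t=-1$) and the mock Seifert matrix $S$ both present $H_1(X_2;\Z)$, so their determinantal ideals coincide. The only cosmetic difference is that you phrase the comparison via $\mathcal{E}_0(H_1(X_2;\Z))$ while the paper speaks of the first elementary ideal; your indexing is the one consistent with $\det S$ being a top-order minor of a square presentation matrix, so this is a matter of convention rather than a gap.
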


\begin{remark}
For an almost classical oriented link $L$, the Alexander polynomial $\Delta_L(t)$ is defined in \cite[Section~7]{BGHNW17} (see also \cite{NNST12}).
It is shown in \cite[Proposition~4.1]{BoKa22} that $\det L =|\Delta_L(-1)|$ holds.
\end{remark}

\begin{figure}[h]
 \centering \includegraphics[width=0.6\textwidth]{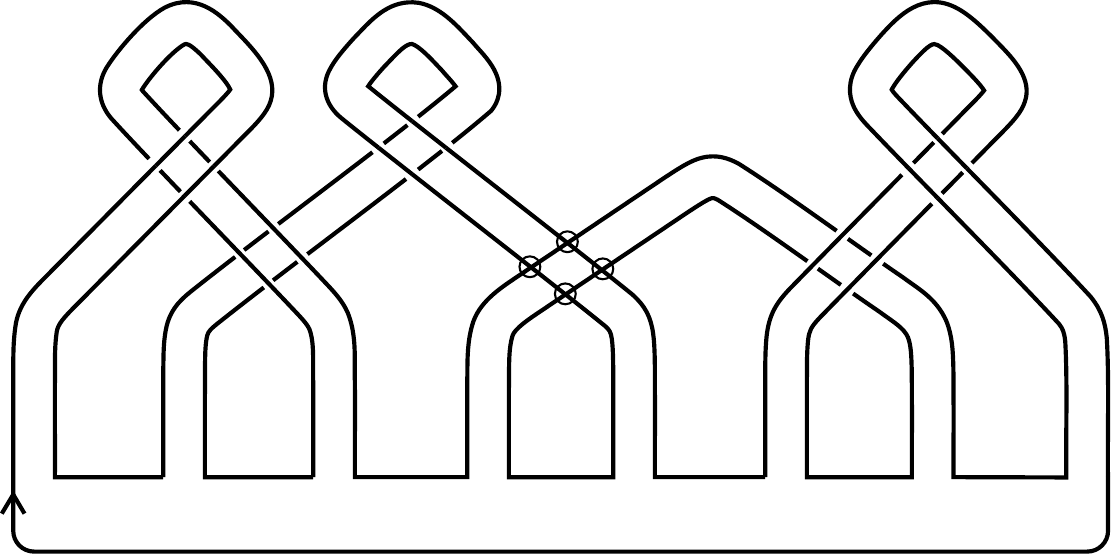}
 \caption{The virtual knot $6.87548$ in Green's table~\cite{Green} (see \cite[Figure~20]{BGHNW17} for its Gauss diagram).}
 \label{fig:687548}
\end{figure}

\begin{example}
\label{ex:mock}
Let $K$ be the virtual knot illustrated in Figure~\ref{fig:687548}.
Using a Seifert surface in \cite[Figure~14]{Chr19} as a spanning surface, we have
\[
S = 
\begin{pmatrix}
-2 & -1 & 0 & 0 \\
-1 & 2 & 1 & 0 \\
0 & -1 & 0 & 1 \\
0 & 0 & 1 & 2
\end{pmatrix},
\]
which is also obtained by adding $V^+$ and $V^-$ in \cite[Example~(6.87548) in Section~6]{Chr19}.
Then one has $\det K =1$.
Note that this result is compatible with \cite[Table~2]{BGHNW17}.
\end{example}

%%%
\subsection{Skein relation}
We prove a kind of skein relation for $\det L$ similar to \cite[Theorem~7.11]{BGHNW17} for the Alexander polynomial of almost classical links.
The idea of the proof is based on \cite[Lemma~3]{Gil82}.
Recall from Theorem~\ref{thm:Carter_surface} that a checkerboard colorable virtual link has a spanning surface in $\Sigma\times [0,1]$.

\begin{theorem}
\label{thm:det_skein}
Let $L_+$, $L_-$ and $L_0$ be oriented virtual links in Figure~\ref{fig:skein_triple}.
Suppose that one of them is checkerboard colorable \textup{(}then the others are also checkerboard colorable\textup{)} and consider spanning surfaces in Figure~\ref{fig:spanning_surface}.
Then, mock Seifert matrices $S_+$, $S_-$, $S_0$ associated with the spanning surfaces satisfy
\[
\det S_+ -\det S_- = 2\det S_0.
\]
\end{theorem}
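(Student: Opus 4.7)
My plan follows the strategy of Gilmer for the classical case~\cite[Lemma~3]{Gil82}. The first step is to choose spanning surfaces $F_+, F_-, F_0$ for $L_+, L_-, L_0$ that agree outside a small ball $B$ containing the distinguished crossing. Starting from a checkerboard coloring of a diagram of $L_+$, the same coloring produces a spanning surface for $L_-$ (after the crossing change) and, by cutting the band through the crossing, a spanning surface for $L_0$. Inside $B$, the surfaces $F_+$ and $F_-$ each carry a half-twisted band connecting two shaded corners, with opposite signs of twist, while $F_0$ has the band removed, as pictured in Figure~\ref{fig:spanning_surface}.

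The second step is to choose compatible bases. I would fix a basis $\gamma_1,\dots,\gamma_n$ of $H_1(F_0;\Z)$. Since $F_\pm$ is obtained from $F_0$ by attaching a band, $H_1(F_\pm;\Z)$ admits a basis $\gamma_1,\dots,\gamma_n,\gamma_{n+1}$, where $\gamma_{n+1}$ is a small cycle winding once around the attached band and supported near $B$. With respect to these bases, I expect the mock Seifert matrices to take the block form
\[
S_\pm = \begin{pmatrix} S_0 & v \\ v^T & d_\pm \end{pmatrix},
\]
with the block $S_0$ and the column $v$ independent of the choice of sign. This is because the Gordon--Litherland pairings $\L_{F_\pm}(\gamma_i,\gamma_j)$ and $\L_{F_\pm}(\gamma_i,\gamma_{n+1})$ for $i,j \le n$ are computed from pieces of the surface (and of its transfers) that either lie entirely outside $B$ or pass through $B$ in a manner independent of the local band twist.

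The crux of the proof is the local computation
\[
d_+ - d_- \;=\; \L_{F_+}(\gamma_{n+1},\gamma_{n+1}) - \L_{F_-}(\gamma_{n+1},\gamma_{n+1}) \;=\; \pm 2.
\]
The factor of $2$ arises because the transfer $\tau\colon H_1(F)\to H_1(\widetilde{F})$ sends $\gamma_{n+1}$ to (essentially) the sum of its two pushoffs on the two sheets of the boundary $\widetilde{F}$ of a tubular neighborhood, and reversing the band's twist changes the virtual linking number of each pushoff with $\gamma_{n+1}$ by one. In the orientable special case this reduces to the familiar identity $S = V + V^T$ together with the classical fact that a crossing change modifies one diagonal entry of a Seifert matrix $V$ by $\pm 1$. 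Granting this, cofactor expansion along the last row of $S_\pm$ yields
\[
\det S_+ - \det S_- \;=\; (d_+ - d_-)\det S_0 \;=\; 2\det S_0,
\]
with the sign fixed by the standard orientation convention of the skein triple.

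I expect the main obstacle to be the explicit verification of $d_+ - d_- = \pm 2$ in the possibly non-orientable setting, which requires working with the double cover $\pi\colon\widetilde{F}\to F$ in a neighborhood of the twisted band and tracking the contributions of the two pushoffs to the virtual linking number. A secondary, easier task is to justify the parenthetical assertion that if one of the three links is checkerboard colorable, so are the other two; this follows from the homological characterization of checkerboard colorability together with the invariance of $[L]\in H_1(\Sigma\times[0,1];\Z/2\Z)$ under a crossing change and an oriented smoothing.
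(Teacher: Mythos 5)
Your proposal follows essentially the same route as the paper: fix a basis of $H_1(F_0;\Z)$, extend it by a single cycle running through the attached band to obtain bases of $H_1(F_\pm;\Z)$, observe that the resulting mock Seifert matrices agree except that the corner entry drops by $2$, and expand the determinant in the last row/column. The only (harmless) discrepancy is that you write the off-diagonal blocks as $v$ and $v^T$, implicitly assuming the pairing is symmetric, whereas the paper allows distinct vectors $\bm{x}$ and $\bm{y}$; this does not affect the determinant identity.
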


\begin{figure}[h]
 \centering \includegraphics[width=0.7\textwidth]{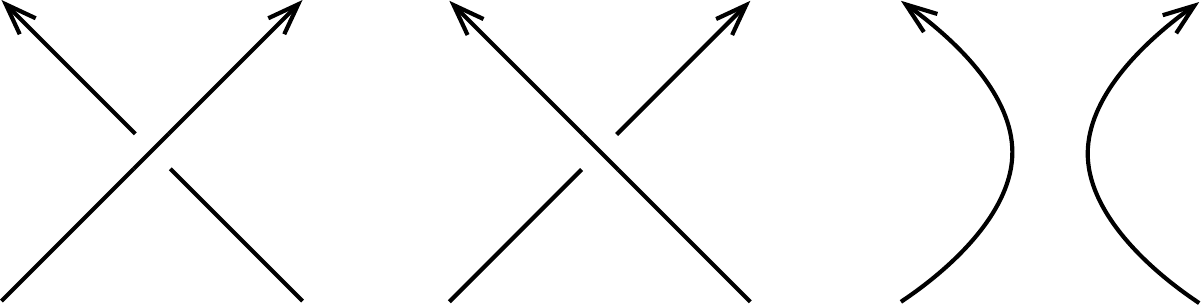}
 \caption{Oriented virtual links $L_+$, $L_-$, $L_0$ that are identical except within a $3$-ball.}
 \label{fig:skein_triple}
\end{figure}

\begin{figure}[h]
 \centering \includegraphics[width=0.7\textwidth]{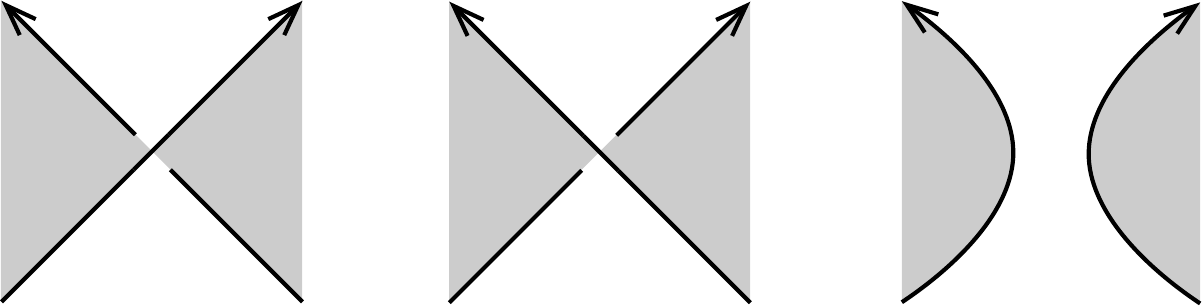}
 \caption{Spanning surfaces $F_+$, $F_-$, $F_0$ that are identical except within a $3$-ball.}
 \label{fig:spanning_surface}
\end{figure}

\begin{remark}
When $F_0$ is disconnected, we obtain a spanning surface by attaching a tube to $F_0$.
The mock Seifert matrix for the resulting spanning surface satisfies Theorem~\ref{thm:det_skein} because then $\det S_+ =\det S_-$ and $\det S_0=0$.
\end{remark}

\begin{proof}[Proof of Theorem~\ref{thm:det_skein}]
Fix a basis of $H_1(F_0;\Z)$ and let $S_0$ be the corresponding mock Seifert surface for $F_0$.
Since $F_+$ is obtained from the connected surface $F_0$ by attaching a band, there exists a loop $\ell$ on $F_+$ passing through the band.
By the Mayer-Vietoris exact sequence, the union of the basis of $H_1(F_0;\Z)$ and $[\ell] \in H_1(F_+;\Z)$ is a basis of $H_1(F_+;\Z)$.
Now, the union also gives a basis of $H_1(F_-;\Z)$.
Let $S_\pm$ be mock Seifert surfaces for $F_\pm$ with respect to the bases.
Then one can see that
\[
S_+=
\begin{pmatrix}
S_0 & \bm{x} \\
\bm{y}^T & a
\end{pmatrix},\quad
S_-=
\begin{pmatrix}
S_0 & \bm{x} \\
\bm{y}^T & a-2
\end{pmatrix}
\]
for some $a\in \Z$ and integral column vectors $\bm{x}, \bm{y}$.
We have
\begin{align*}
\det S_- &= \det
\begin{pmatrix}
S_0 & \bm{x} \\
\bm{y}^T & a-2
\end{pmatrix} \\
&= \det
\begin{pmatrix}
S_0 & \bm{x} \\
\bm{y}^T & a
\end{pmatrix}
+\det
\begin{pmatrix}
S_0 & \bm{0} \\
\bm{y}^T & -2
\end{pmatrix} \\
&= \det S_+ -2\det S_0.
\end{align*}
This completes the proof.
\end{proof}

%%%%%
\section{Ascending and descending polynomials}
\label{sec:ascending_descending}

We review basic definitions concerning the ascending and descending polynomials according to \cite{CKR09}.
An \emph{arrow diagram} is a based chord diagram with oriented chords (called arrows).
A \emph{homomorphism} $\varphi$ from an arrow diagram $A$ to a based Gauss diagram $G$ is an orientation-preserving homeomorphism of the circle of $A$ to the circle of $G$ which maps the basepoint to the basepoint and induces an injective map of arrows of $A$ to arrows of $G$ respecting the orientation of the arrows.

\begin{definition}
For an arrow diagram $A$ and a based Gauss diagram $G$, define the \emph{pairing} $\ang{A,G}\in \Z$ by
\[
\ang{A,G}=\sum_{\varphi\in\Hom(A,G)} \prod_{\text{$\a$:\,arrow in $A$}}\sign(\varphi(\a)).
\]
\end{definition}

We recall some terminology from \cite[Definitions~4.1 and 4.3]{CKR09}.
A chord diagram (or arrow diagram) is said to be \emph{$k$-component} if the curve obtained from the diagram by taking the parallel double of each chord consists of $k$ circles.
Let $A$ be a $1$-component arrow diagram and consider traversing the curve obtained by parallel doubling, starting from the basepoint and following the orientation of the diagram.
The diagram $A$ is said to be \emph{ascending} (resp.\ \emph{descending}) if, for each arrow $\a$, the direction in which $\a$ is first traversed is opposite to (resp.\ the same as) the direction of $\a$.

\begin{definition}
For a non-negative integer $n$, the \emph{Conway combination} $\C_{2n}$ (resp.\ $\C'_{2n}$) is defined to be the sum of $1$-component ascending (resp.\ descending) arrow diagrams with $2n$ arrows on a circle.
Similarly, $\C_{2n+1}$ (resp.\ $\C'_{2n+1}$) is defined to be the sum of $1$-component ascending (resp.\ descending) diagrams with $2n+1$ arrows on two circles.
\end{definition}

For instance, each of $\C_{1}$, $\C'_{1}$, $\C_{2}$, and $\C'_{2}$ consists of a single diagram illustrated in Figure~\ref{fig:Conway_combination}.

\begin{figure}[h]
 \centering \includegraphics[width=0.9\textwidth]{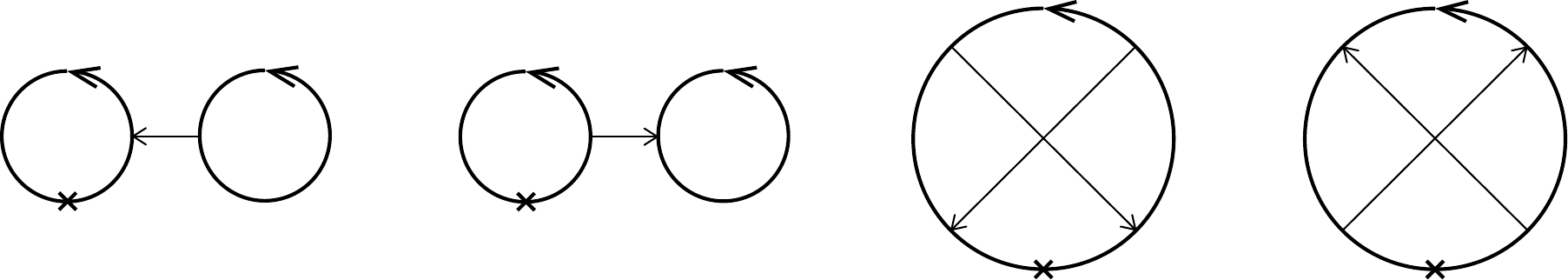}
 \caption{Conway combinations $\C_{1}$, $\C'_{1}$, $\C_{2}$, and $\C'_{2}$.}
 \label{fig:Conway_combination}
\end{figure}

\begin{definition}
For a based Gauss diagram $G$, define the \emph{ascending polynomial} $\nabla_\asc(G)(z)$ and \emph{descending polynomial} $\nabla_\des(G)(z) \in \Z[z]$ respectively by
\[
\nabla_\asc(G)(z)=\sum_{i\geq 0}\ang{\C_{2i},G}z^{2i}
\text{ and }
\nabla_\des(G)(z)=\sum_{i\geq 0}\ang{\C'_{2i},G}z^{2i}.
\]
\end{definition}

\begin{remark}
The polynomials $\nabla_\asc(G)(z)$ and $\nabla_\des(G)(z)$ depend on the choice of a basepoint of $G$ in general.
For instance, one can observe it for the Gauss diagram in Figure~\ref{fig:virtual_trefoil}.
This is because the virtual knot in Figure~\ref{fig:virtual_trefoil} is not (mod $0$) almost classical, which implies that the assumption of Theorems~\ref{thm:main} and \ref{thm:basepoint} is essential in their proofs.
\end{remark}

\begin{example}
\label{ex:Green}
Let $K$ be the (almost classical) virtual knot $6.87548$ appearing in Example~\ref{ex:mock}.
For the Gauss diagram in \cite[Figure~14]{Chr19} (with an arbitrary basepoint), the coefficients of $z^2$ in $\nabla_\asc(G)(z)$ and in $\nabla_\des(G)(z)$ are $-2$.
On the other hand, concerning determinant and Arf invariant in \cite{BoKa24}, one can see $\det(K,F)=5$ and $\Arf(q_{K,F}) =1$.
Indeed, these are computed from a Seifert matrix
\[
V^+ =
\begin{pmatrix}
-1 & 0 & 0 & 0 \\
-1 & 1 & 1 & 0 \\
0 & -1 & 0 & 1 \\
0 & 0 & 0 & 1
\end{pmatrix}
%V^- = 
%\begin{pmatrix}
%-1 & -1 & 0 & 0 \\
%0 & 1 & 0 & 0 \\
%0 & 0 & 0 & 0 \\
%0 & 0 & 1 & 1
%\end{pmatrix},
\]
written in \cite[Example (6.87548) in Section~6]{Chr19}.
See also \cite[Remark~3.5]{BoKa24}.
\end{example}

In \cite[Lemma~5.1 and (5.3)]{CKR09}, skein relations among pairings are shown for classical links.
In their proofs, no property of classical knots is used, and hence we obtain the following lemmas as extensions to virtual links.

\begin{lemma}
\label{lem:skein_even}
Let $D_+$, $D_-$, $D_0$ be a skein triple drawn in Figure~\ref{fig:skein_triple} such that $D_\pm$ have one component and $D_0$ has two components.
Let $G_+$, $G_-$, $G_0$ be the corresponding Gauss diagrams.
Then, for $n\geq 1$,
\begin{align*}
\ang{\C_{2n},G_+} - \ang{\C_{2n},G_-} &= \ang{\C_{2n-1},G_0}, \\
\ang{\C'_{2n},G_+} - \ang{\C'_{2n},G_-} &= \ang{\C'_{2n-1},G_0}.
\end{align*}
\end{lemma}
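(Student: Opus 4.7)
The plan is to transpose the combinatorial argument of \cite[Lemma~5.1 and (5.3)]{CKR09} to the virtual setting. As already noted in the paragraph preceding the statement, that argument is carried out entirely at the level of Gauss and arrow diagrams, so classicality is never used; it therefore suffices to follow the same proof verbatim, once the correct combinatorial structures have been identified.

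I would begin by recording the effect of the skein change on Gauss diagrams. Let $c$ denote the chord of $G_\pm$ corresponding to the distinguished crossing: $G_+$ and $G_-$ agree as unsigned arrow diagrams and differ only in that $\sign(c)$ flips, while $G_0$ is obtained from $G_\pm$ by deleting $c$ and cutting the underlying circle at the two endpoints of $c$, producing a two-circle Gauss diagram with one fewer chord. Next, I would partition $\Hom(\C_{2n}, G_\pm)$ according to whether the image of a given $\varphi$ contains the chord $c$; homomorphisms avoiding $c$ contribute identical sign products to $\ang{\C_{2n}, G_+}$ and $\ang{\C_{2n}, G_-}$, so they cancel in the difference.

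The main step is to identify the remaining homomorphisms, namely those whose image uses $c$, with $\Hom(\C_{2n-1}, G_0)$ through a sign-tracking correspondence. The core observation is that deleting one arrow from a one-component ascending arrow diagram on one circle, and cutting the circle at the two endpoints of that arrow, yields a one-component ascending arrow diagram on two circles; collected across the summands defining $\C_{2n}$, this operation matches the summands defining $\C_{2n-1}$. The opposite sign assigned to $c$ in $G_+$ versus $G_-$ supplies the overall sign factor that appears in the final identity. The second equation then follows by replacing ``ascending'' by ``descending'' throughout, since the one-component descending condition is stable under the same deletion-and-cut operation.

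The main obstacle is verifying cleanly that the deletion-and-cut operation provides the required correspondence between the defining summands of $\C_{2n}$ (with a distinguished arrow mapped to $c$) and those of $\C_{2n-1}$, together with the correct sign bookkeeping. Since neither the partition nor the correspondence invokes any property specific to classical realizations, this reduces to the combinatorial check already performed in \cite{CKR09}.
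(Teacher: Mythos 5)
Your proposal matches the paper's own treatment: the paper gives no independent argument for this lemma, but simply observes (in the sentence preceding the statement) that the proofs of Lemma~5.1 and (5.3) in \cite{CKR09} are purely combinatorial at the level of Gauss/arrow diagrams and never invoke classicality, which is exactly the reduction you make. One small correction to your sketch: a crossing change reverses the orientation of the chord $c$ as well as flipping its sign, so $G_+$ and $G_-$ do \emph{not} agree as unsigned arrow diagrams --- this changes which subdiagrams through $c$ are ascending/descending and is part of the sign bookkeeping, but it does not affect the validity of deferring to \cite{CKR09}, where the correspondence is carried out with the correct convention.
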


\begin{lemma}
\label{lem:skein_odd}
Let $D_+$, $D_-$, $D_0$ be a skein triple drawn in Figure~\ref{fig:skein_triple} such that $D_\pm$ have two components and $D_0$ has one component.
Let $G_+$, $G_-$, $G_0$ be the corresponding Gauss diagrams.
Then, for $n\geq 0$,
\begin{align*}
\ang{\C_{2n+1},G_+} - \ang{\C_{2n+1},G_-} &= \ang{\C_{2n},G_0}, \\
\ang{\C'_{2n+1},G_+} - \ang{\C'_{2n+1},G_-} &= \ang{\C'_{2n},G_0}.
\end{align*}
\end{lemma}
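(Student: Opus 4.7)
The proof follows by directly adapting the argument used in~\cite[Lemma~5.1 and (5.3)]{CKR09} for the classical case. The original argument is a purely combinatorial manipulation of the pairings $\ang{A,G}$ on arrow and Gauss diagrams and never invokes realizability by a classical link; since the Conway combinations $\C_k$ and $\C'_k$ are themselves defined combinatorially, the same reasoning transcribes verbatim to the virtual setting. Thus my plan is to reproduce the skein-style argument, keeping careful track of the role of the distinguished chord and the ascending/descending conditions.

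Concretely, let $c$ denote the distinguished chord of $G_\pm$ corresponding to the crossing varied in Figure~\ref{fig:skein_triple}; in $G_+$ (resp.\ $G_-$) it carries sign $+1$ (resp.\ $-1$), and its orientation is reversed between $G_+$ and $G_-$. The smoothing $G_0$ is obtained from $G_\pm$ by deleting $c$ and joining the two emerging arcs, which amalgamates the two circles of $G_\pm$ into the single circle of $G_0$. For each summand $A$ of $\C_{2n+1}$ (a one-component ascending arrow diagram on two circles with $2n+1$ arrows) I would classify each $\varphi\in\Hom(A,G_\pm)$ according to whether some arrow $\a_0$ of $A$ is mapped to $c$. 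Homomorphisms that avoid $c$ contribute identically to $\ang{\C_{2n+1},G_+}$ and $\ang{\C_{2n+1},G_-}$ since the two Gauss diagrams agree on every other chord and sign, and hence cancel in the difference.

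For homomorphisms that do use $c$, contracting $\a_0$ (removing it and merging the two circles of $A$ at the endpoints of $\a_0$) produces an arrow diagram $A^\flat$ on one circle with $2n$ arrows. The ascending property of $A$ transfers to $A^\flat$, so $A^\flat$ is itself a summand of $\C_{2n}$, and $\varphi$ induces $\varphi^\flat\colon A^\flat\to G_0$. Conversely, each pair $(A^\flat,\varphi^\flat)$ lifts to a unique $(A,\varphi)$ with $\varphi$ using $c$, and the one-component ascending structure on $A$ rigidly determines the direction of $\a_0$ relative to the traversal, which in turn specifies whether the lift lives in $\Hom(A,G_+)$ (matching the orientation of $c_+$) or in $\Hom(A,G_-)$ (matching $c_-$). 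Combining this bijection with the $\pm 1$ sign of $c_\pm$ and the minus sign in the difference $\ang{\C_{2n+1},G_+}-\ang{\C_{2n+1},G_-}$ yields the contribution $\ang{\C_{2n},G_0}$.

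The principal obstacle is the bookkeeping in the last step: one must check that the one-component ascending condition pins down a unique orientation of $\a_0$, so each $(A^\flat,\varphi^\flat)$ lifts to exactly one side of the skein triple rather than to both. Without this rigidity, the contributions from homomorphisms using $c$ would produce $2\ang{\C_{2n},G_0}$ instead of $\ang{\C_{2n},G_0}$. Once this point is verified by reading off the direction of $\a_0$ from how the basepoint traversal of $A$ crosses between its two circles, the identical argument with the descending Conway combinations $\C'_{2n+1}$ and $\C'_{2n}$ in place of $\C_{2n+1}$ and $\C_{2n}$ yields the second identity.
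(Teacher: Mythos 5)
Your proposal is correct and takes essentially the same route as the paper: the paper proves this lemma simply by observing that the argument of \cite[Lemma~5.1 and (5.3)]{CKR09} is purely combinatorial on Gauss diagrams and never uses any property of classical links, so it transfers verbatim to the virtual setting. Your write-up merely makes that transferred argument explicit (cancellation of homomorphisms avoiding the distinguished chord, the contraction bijection onto $\Hom(A^\flat,G_0)$, and the sign bookkeeping), which is consistent with how the cited proof actually runs.
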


We extend the notion of warping degree from \cite{Shi10} to based virtual knots. 
We will use this notion in the proof of Theorem~\ref{thm:det}.

\begin{definition}
Let $D$ be a diagram of an oriented virtual knot with a basepoint.
The \emph{warping degree} $d(D)$ of $D$ is defined to be the number of crossings at which we first go through the under-arcs when we travel from the basepoint along the orientation of $D$.
Moreover, the \emph{warping degree} $d(G)$ of a based Gauss diagram $G$ is defined to be that of a virtual knot diagram whose Gauss diagram is $G$.
\end{definition}

\begin{example}
Let $D$ be the oriented virtual knot diagram drawn in Figure~\ref{fig:687548} with the basepoint at the bottom.
We then have $d(D)=10$.
\end{example}

A diagram $D$ is said to be \emph{descending} if $d(D)=0$.
It is shown in \cite[Proposition~2.2]{Sat18} that a descending diagram is welded equivalent to the trivial one.
This fact will be used in the proof of Lemma~\ref{lem:det1}.

%%%%%
\section{Main results}

This section is devoted to proving Theorem~\ref{thm:main} and Corollary~\ref{cor:det}.
We first show some lemmas.

\begin{lemma}
\label{lem:warp_zero}
Let $G$ be a based Gauss diagram with $d(G)=0$.
Then, for any positive integer $n$,
\[
\ang{\C_{2n},G_b}=\ang{\C'_{2n},G_b}=0.
\]
\end{lemma}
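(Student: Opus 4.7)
The plan is to prove the vanishing of $\ang{\C_{2n},G_b}$ and $\ang{\C'_{2n},G_b}$ for $n\ge 1$ by showing directly that $\Hom(A,G_b)=\emptyset$ for every arrow diagram $A$ appearing in the support of $\C_{2n}$ or $\C'_{2n}$.

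First, I would translate the hypothesis $d(G)=0$ into a purely combinatorial statement at the level of the Gauss diagram: traversing the circle of $G$ counterclockwise from the basepoint $b$, every arrow of $G$ is encountered at its tail (over-crossing endpoint) before its head (under-crossing endpoint). Equivalently, no arrow of $G$ has its head preceding its tail in the cyclic order induced from $b$.

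For the ascending combinations $\C_{2n}$, I would unpack \cite[Definition~4.3]{CKR09} to extract the following feature of each summand $A$ with $n\ge 1$: reading $A$ from its basepoint, at least one arrow is met head-first. Since any $\varphi\in\Hom(A,G_b)$ is an orientation-preserving homeomorphism of circles, sends basepoint to basepoint, and preserves arrow orientations, such an arrow would map to an arrow of $G$ met head-first from $b$, contradicting $d(G)=0$. Hence $\Hom(A,G_b)=\emptyset$ and $\ang{\C_{2n},G_b}=0$.

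The descending case $\C'_{2n}$ is the subtler of the two, because individually each arrow of a descending arrow diagram is met tail-first, which is compatible with $d(G)=0$. Here the ``one-component'' condition of \cite[Definition~4.3]{CKR09} must do the work: combined with the descending condition, it constrains how arrows of $A$ link or nest on the circle in a way that cannot be embedded into $G$ while simultaneously respecting the tail-before-head ordering of every arrow of $G$ and preservation of the basepoint. Once this combinatorial obstruction is isolated, the same homomorphism argument concludes $\ang{\C'_{2n},G_b}=0$.

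The chief obstacle is pinning down the precise obstruction in the descending case, since the naive per-arrow orientation check is insufficient. A fallback strategy would be induction on the number of chords of $G$: select the first chord $c$ met from $b$, apply the skein relations of Lemmas~\ref{lem:skein_even} and \ref{lem:skein_odd} at $c$, and reduce to Gauss diagrams with fewer chords. This requires verifying that the crossing-changed diagram $G_{-}$ remains descending (which is immediate, since signs do not affect $d$) and that the smoothed two-component diagram $G_0$ still satisfies a warping condition strong enough for the induction to close, which will be the main technical cost of this alternative route.
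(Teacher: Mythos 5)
Your argument for $\ang{\C_{2n},G_b}=0$ is fine and is essentially what the paper dismisses as ``obvious by definition'': the first arrow endpoint encountered from the basepoint of an ascending diagram is a head, and any homomorphism transports this to an arrow of $G$ met head-first from $b$, which $d(G)=0$ forbids. The genuine gap is in the descending case, and it sits exactly where you flag it: you assert that the one-component condition ``must do the work'' and that the argument concludes ``once this combinatorial obstruction is isolated,'' but you never isolate it. The paper does so in one step: if $d(G)=0$, then every descending subdiagram of $G_b$ consists of pairwise non-interlaced arrows, as in Figure~\ref{fig:d0}. The point is that the descending condition of \cite[Definition~4.3]{CKR09} is imposed along the traversal of the smoothed component, not along the base circle: if two arrows of a subdiagram interlace, then after first meeting one of them the traversal jumps to its far endpoint, and from there it reaches the other arrow at its under endpoint first, so the subdiagram is not descending. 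On the other hand, a one-circle diagram whose $2n\geq 2$ arrows are pairwise non-interlaced smooths to more than one component, hence is not one-component and does not occur in $\C'_{2n}$. Together these two observations give $\ang{\C'_{2n},G_b}=0$.

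Your fallback route does not repair this. First, a crossing change reverses the arrow, not merely its sign, so the claim that $G_-$ ``remains descending'' is false: changing a crossing of a warping-degree-zero diagram raises the warping degree. Second, the relations of Lemmas~\ref{lem:skein_even} and \ref{lem:skein_odd} only compare $G_+$ and $G_-$ with a smoothing $G_0$; they never produce a descending diagram with fewer chords, so the proposed induction has no decreasing quantity and cannot get started. Indeed, in the paper the $d(G)=0$ case is itself the base case of the skein-relation inductions (see the proofs of Theorems~\ref{thm:C_prime} and \ref{thm:det}), so it has to be settled by the direct combinatorial argument above rather than by skein relations.
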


\begin{proof}
First, $\ang{\C_{2n},G_b}=0$ is obvious by definition.
Next, since $d(G)=0$, every descending subdiagram of $G$ is of the form in Figure~\ref{fig:d0}.
This is not $1$-component, and thus $\ang{\C'_{2n},G_b}=0$.
\end{proof}

\begin{figure}[h]
 \centering \includegraphics[width=0.2\textwidth]{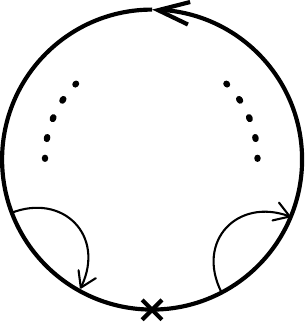}
 \caption{Descending diagram with isolated $2n$ arrows.}
 \label{fig:d0}
\end{figure}

\begin{lemma}
\label{lem:smoothing}
Let $G_b$ be a based Gauss diagram of mod $p$ almost classical oriented virtual knot and let $\a$ be an arrow.
Then $\a$ divides the circle into two arcs and suppose that the tail of any arrow intersecting with $\a$ is attached to the arc containing $b$.
Let $H_b$ be the $2$-component based Gauss diagram obtained by smoothing $G_b$ along $\a$.
Then, $\ang{\C_{1},H_b}\equiv \ang{\C'_{1},H_b}\equiv 0\bmod p$ and $\ang{\C_{2n-1},H_b}=0$ for $n\geq 2$.
\end{lemma}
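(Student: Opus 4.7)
The plan begins with a structural analysis of $H_b$. Write $C_1$ for the circle of $H_b$ inheriting the basepoint $b$ and $C_2$ for the other circle. Arrows of $G_b$ other than $\a$ that are disjoint from $\a$ stay within one of $C_1$ or $C_2$, whereas those intersecting $\a$ become inter-circle arrows in $H_b$. By the standing hypothesis on $\a$, every such inter-circle arrow has its tail on $C_1$ and its head on $C_2$.

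For the first claim, observe from Figure~\ref{fig:Conway_combination} that $\C_1$ and $\C'_1$ each consist of a single inter-circle arrow, distinguished by whether the arrow's head or tail lies on the basepoint circle. The one whose direction is opposite to the common direction of inter-circle arrows in $H_b$ admits no homomorphism into $H_b$, so its pairing vanishes outright. The one whose direction matches $H_b$ has pairing equal to the signed sum $\sum \sign(c)$ taken over arrows $c$ of $G_b$ intersecting $\a$. Orienting the head of $\a$ upward and comparing with Definition~\ref{def:index}, one sees that in our situation one of $r_\pm$ or $l_\pm$ vanishes (depending on which side of $\a$ contains $b$), so this signed sum equals $\pm \varepsilon(\a)^{-1} I(\a)$. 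Since $G$ is mod $p$ almost classical, Lemma~\ref{lem:index} gives $I(\a) \equiv 0 \bmod p$, and hence both $\ang{\C_1, H_b}$ and $\ang{\C'_1, H_b}$ are $\equiv 0 \bmod p$.

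For the second claim, fix a summand $A$ of $\C_{2n-1}$ with $n \geq 2$; this is a one-component ascending arrow diagram on two circles with $2n-1 \geq 3$ arrows. The one-component condition forces $A$ to contain at least one inter-circle arrow. The ascending condition, applied to the traversal that begins at the basepoint circle and then proceeds to the other, forces every inter-circle arrow of $A$ to have its head on the basepoint circle (encountered before its tail on the other circle). A homomorphism $A \to H_b$ necessarily maps the basepoint circle of $A$ to $C_1$, so each inter-circle arrow of $A$ must map to an inter-circle arrow of $H_b$ with head on $C_1$. No such arrow exists in $H_b$, so no homomorphism exists, and thus $\ang{\C_{2n-1}, H_b} = 0$.

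The main obstacle is verifying, directly from \cite[Definition~4.3]{CKR09}, the structural fact that every summand of $\C_{2n-1}$ for $n \geq 2$ has all its inter-circle arrows oriented with head on the basepoint circle; once this is in hand, the vanishing of the pairing is immediate from the absence of such arrows in $H_b$. Should this direct analysis prove delicate, an alternative route is to apply the skein relation of Lemma~\ref{lem:skein_odd} at an inter-circle arrow of $H_b$, thereby reducing $\ang{\C_{2n-1}, H_b}$ to pairings of $\C_{2n-2}$ with one-component diagrams, which can be controlled inductively using the descending structure inherited from the all-tails-on-$C_1$ condition.
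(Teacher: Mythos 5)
Your proposal is correct and follows essentially the same route as the paper's proof: one of the two single-arrow pairings vanishes because every inter-circle arrow of $H_b$ has its tail on the based circle, the other is identified (up to sign) with the index $I(\a)$ and hence vanishes mod $p$ by Lemma~\ref{lem:index}, and for $n\geq 2$ any one-component subdiagram must contain an inter-circle arrow whose tail lies on the based circle, which is incompatible with the ascending condition. The extra detail you supply on the index computation and the fallback via Lemma~\ref{lem:skein_odd} are not needed but do no harm.
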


\begin{proof}
First, $\ang{\C_{1},H_b}=0$ is obvious by definition.
Next, by Lemma~\ref{lem:index}, we have $\ang{\C_{1},H_b}\equiv \ang{\C'_{1},H_b}\bmod p$.
Let $H'$ be a $1$-component subdiagram of $H_b$ with $2n-1$ arrows.
$H'$ must have an arrow connecting two circles of $H_b$.
Since the tail of the arrow is attached to the circle containing $b$, $H'$ is not ascending, and hence $\ang{\C_{2n-1},H_b}=0$.
\end{proof}

\begin{lemma}
\label{lem:det1}
Let $K$ be a checkerboard colorable virtual knot admitting a descending diagram.
Then, $\det K=1$.
\end{lemma}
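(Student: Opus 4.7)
The plan is to combine two ingredients already recorded in the paper. By hypothesis $K$ admits a descending diagram $D$, which is checkerboard colorable. The first ingredient, stated at the end of Section~\ref{sec:ascending_descending}, is that a descending diagram is welded equivalent to the trivial diagram via \cite[Proposition~2.2]{Sat18}. The second ingredient, stated right after Definition~\ref{def:determinant}, is that $\det K$ is invariant under welded equivalence, because the coloring matrix is preserved by the forbidden overpass move in \cite[Figure~1]{BGHNW17}, in addition to the Reidemeister and virtual moves.

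The argument then proceeds in three short steps. First I would compute $\det K$ from the coloring matrix of $D$, using that $D$ is checkerboard colorable. Second I would apply \cite[Proposition~2.2]{Sat18} to reduce $D$ to the trivial unknot diagram via a sequence of Reidemeister moves, virtual moves, and forbidden overpass moves. Third I would invoke welded invariance of $\det$ to obtain $\det K = \det U$, where $U$ denotes the unknot. The last step is to verify $\det U = 1$, which follows from the mock Seifert matrix interpretation stated just before Example~\ref{ex:mock}: a disk is a spanning surface for $U$, so the associated mock Seifert matrix is empty and has determinant $1$ by convention.

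I do not expect a genuine obstacle. The only point worth watching is that intermediate diagrams in the welded reduction need not be checkerboard colorable on the nose; however, welded invariance of $\det$ on the class of checkerboard colorable diagrams already subsumes this concern, since the coloring matrix computation on $D$ is what defines $\det K$, and its value is preserved along the reduction to the trivial diagram.
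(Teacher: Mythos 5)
Your proposal is correct and follows essentially the same route as the paper's own proof: both invoke \cite[Proposition~2.2]{Sat18} to reduce the descending diagram to the trivial one and then use welded invariance of $\det$ (via invariance of the coloring matrix under the forbidden overpass move). The extra verification that $\det U = 1$ via the empty mock Seifert matrix is a reasonable way to finish, and the paper simply leaves this step implicit.
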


\begin{proof}
First, a descending diagram is welded equivalent to the trivial one by \cite[Proposition~2.2]{Sat18}.
Since $\det K$ is invariant under welded equivalence, we conclude that $\det K=1$.
\end{proof}

\begin{lemma}
\label{lem:L1onL2}
Let $L=L_1\cup L_2$ be a $2$-component mod $2$ almost classical oriented virtual link.
If $L$ admits a diagram $D=D_1\cup D_2$ such that $D_1$ is above $D_2$ at every real crossing between them, then $\det L=0$.
\end{lemma}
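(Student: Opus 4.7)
The plan is to exhibit a mock Seifert matrix $S$ for $L$ whose determinant vanishes; since $\det L=|{\det S}|$ for any such matrix (the proposition preceding Example~\ref{ex:mock}), this would yield $\det L=0$.

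First I would isotope $L$ inside $\Sigma\times[0,1]$ so that $L_1\subset\Sigma\times(\tfrac{1}{2},1)$ and $L_2\subset\Sigma\times(0,\tfrac{1}{2})$, which is possible because $D_1$ is above $D_2$ at every real crossing between them. Since the constraints imposed by a $D_1$-$D_2$ crossing relate only an over-arc of $D_1$ to under-arcs of $D_2$, any mod~$2$ Alexander numbering of $D=D_1\cup D_2$ restricts to a mod~$2$ Alexander numbering of each $D_i$ separately. Consequently each $L_i$ is itself mod~$2$ almost classical, and after possibly stabilising $\Sigma$ (which preserves $\det L$ by welded invariance) each $L_i$ admits a spanning surface $F_i$ inside its half of $\Sigma\times[0,1]$. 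Tubing $F_1$ and $F_2$ via a small cylinder $T$ near $\Sigma\times\{\tfrac{1}{2}\}$ produces a connected spanning surface $F=F_1\cup T\cup F_2$ of $L$.

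The core of the argument is then to verify that, in a basis of $H_1(F)$ containing the boundary class $\alpha=[L_1]$, the row and column of the Gordon--Litherland form $\mathcal{L}_F$ indexed by $\alpha$ vanish. The pushoffs $L_1^{\pm}$ sit in the upper half of $\Sigma\times[0,1]$, so $\vlk(L_1^{\pm},\beta)=0$ whenever $\beta$ is represented on $F_2$ or on $T$; and for $\beta$ supported on $F_1$ a parallel pushoff $F_1^{\pm}$ serves as a surface bounded by $L_1^{\pm}$ disjoint from $\beta$, again giving the vanishing. Hence $S$ has a zero row, $\det S=0$, and $\det L=0$.

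The main obstacle lies in making the spanning-surface construction precise: passing from the diagrammatic observation that each $D_i$ inherits a mod~$2$ Alexander numbering to the $3$-dimensional homological statement $[L_i]=0$ in $H_1(\Sigma;\Z/2\Z)$ needed to fit $F_i$ into the ambient Carter surface $\Sigma$. I expect this to be handled by a stabilisation of $\Sigma$ that does not alter $\det L$, together with careful attention to the non-orientable case (where $\widetilde{F}\to F$ is a genuine double cover rather than a disjoint union).
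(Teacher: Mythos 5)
Your proposal takes a genuinely different route from the paper --- the paper's proof is purely combinatorial: ordering the self-crossings and long arcs of $D_1$ first, the hypothesis forces the coloring matrix into the block form $B(D)=\left(\begin{smallmatrix}B(D_1)&O\\ \ast&\ast\end{smallmatrix}\right)$ with $B(D_1)$ a square singular block, so every $(n-1)\times(n-1)$ minor vanishes. Your surface-theoretic route, however, has a gap at its very first step, and it is exactly the one you flag as ``the main obstacle'': the construction $F=F_1\cup T\cup F_2$ requires each component $L_i$ separately to bound a spanning surface in $\Sigma\times[0,1]$, i.e.\ $[L_i]=0\in H_1(\Sigma\times[0,1];\Z/2\Z)$. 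This does \emph{not} follow from $[L_1]+[L_2]=[L]=0$. For instance, take $L_1$ and $L_2$ to be two copies of an essential simple closed curve on $\Sigma$ placed at heights $3/4$ and $1/4$ (perturbed so that their projections cross, necessarily with $D_1$ on top): the link is mod $2$ almost classical, the hypothesis of the lemma holds, yet neither component is null-homologous mod $2$ and hence $F_1$ does not exist. Stabilization cannot rescue this, since adding handles only enlarges $H_1(\Sigma;\Z/2\Z)$ and never kills the class $[L_1]$; nor does a mod $2$ Alexander numbering of $D$ restrict to one of $D_1$, because the numbering of the over-strand is also constrained (and in general jumps) at the mixed crossings. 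Note that the paper's argument needs no such hypothesis on the individual components, which is precisely why it goes through.

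A secondary point: $\vlk$ is not symmetric for links in thickened surfaces. With the paper's definition ($[K_2]$ is measured in $H_1(\Sigma\times[0,1]\setminus K_1,\Sigma\times\{1\};\Z)$, i.e.\ relative to the \emph{top}), one has $\vlk(K_1,K_2)=\sum\varepsilon(c)$ over crossings where $K_1$ passes \emph{over} $K_2$; so for $\beta$ supported below $L_1$ it is $\vlk(\tau(\beta),L_1)$ that vanishes, not $\vlk(L_1^{\pm},\beta)$. You would therefore be proving that a \emph{column} of $S$ indexed by $[L_1]$ vanishes rather than a row (the $F_1$-pushoff argument does handle the remaining entries of that column). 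This by itself still gives $\det S=0$ and is repairable, but together with the homological gap above it means the proof as written does not establish the lemma.
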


\begin{proof}
Let $c_1,\dots,c_k$ be the real crossings of $D_1$ and itself, and let $c_{k+1},\dots,c_n$ be the rest of real crossings of $D$.
We may assume $k\geq 1$ and $n\geq 2$ by Reidemeister moves.
Let $a_1,\dots,a_l$ be long arcs of $D_1$ and let $a_{l+1},\dots,a_m$ be that of $D_2$.
The assumption implies $k=l$ and we have 
\[
B(D) =
\begin{pmatrix}
 B(D_1) & O \\
 \ast & \ast 
\end{pmatrix}.
\]
Since $\det B(D_1) =0$ (see a sentence before \cite[Proposition~3.1]{BoKa22}), any $(n-1)\times(n-1)$ minor of $B(D)$ is zero.
\end{proof}

\begin{theorem}
\label{thm:det}
Let $K$ be a checkerboard colorable oriented virtual knot, $G$ a based Gauss diagram of $K$, and $S$ a mock Seifert matrix for $K$.
Then, $\det S\equiv \pm\nabla_\asc(G)(2) \bmod 8$.
\end{theorem}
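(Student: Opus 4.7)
The plan is induction on the warping degree $d(G)$. For the base case $d(G) = 0$, combining Lemma~\ref{lem:det1} (which gives $\det S = \pm 1$) and Lemma~\ref{lem:warp_zero} (which forces $\ang{\C_{2n}, G_b} = 0$ for $n \geq 1$, hence $\nabla_\asc(G)(z) = 1$ and $\nabla_\asc(G)(2) = 1$) immediately yields the desired congruence.

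For the inductive step ($d(G) \geq 1$), I would select a bad crossing $c$ and perform the oriented skein at $c$ to produce a triple $L_+$, $L_-$, $L_0$; after relabeling so that $L = L_+$, the crossing change $L_-$ satisfies $d(G_-) = d(G) - 1$, so by induction $\det S_- \equiv \pm \nabla_\asc(G_-)(2) \pmod 8$. Combining Theorem~\ref{thm:det_skein} with Lemma~\ref{lem:skein_even} evaluated at $z = 2$ (the contributions $\ang{\C_{2n-1}, G_0}\, 2^{2n}$ for $n \geq 2$ being multiples of $16$) yields
\[
\det S_+ - \det S_- = 2 \det S_0 \quad \text{and} \quad \nabla_\asc(G_+)(2) - \nabla_\asc(G_-)(2) \equiv 4 \ang{\C_1, G_0} \pmod 8,
\]
so the inductive step reduces to verifying the congruence $\det S_0 \equiv 2 \ang{\C_1, G_0} \pmod 4$.

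My strategy for the last congruence is to choose the bad crossing $c$ so that the chord $\a_c$ of $G$ has the following property: every chord of $G$ intersecting $\a_c$ has its tail on the arc of the circle containing $b$. This is at once the hypothesis of Lemma~\ref{lem:smoothing} (which then forces $\ang{\C_1, G_0} \equiv 0 \pmod 2$ and $\ang{\C_{2n-1}, G_0} = 0$ for $n \geq 2$, the latter justifying the simplification above) and, when restated for the smoothed diagram $L_0 = L_1 \cup L_2$, the hypothesis of Lemma~\ref{lem:L1onL2} (which forces $\det S_0 = 0$ since the $b$-component $L_1$ then lies above $L_2$ at every crossing between them). Both sides of the target congruence then vanish mod $4$, and the induction closes: $\det S_+ = \det S_- \equiv \pm \nabla_\asc(G_-)(2) \equiv \pm \nabla_\asc(G_+)(2) \pmod 8$.

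The main obstacle will be to establish that such a bad crossing always exists for a based Gauss diagram of a checkerboard colorable virtual knot with $d(G) \geq 1$. This is a purely combinatorial question about chord diagrams, and I expect its resolution to exploit the Eulerian condition on the chord intersection graph implied by Lemma~\ref{lem:index} with $p = 2$ (every chord intersects an even number of others) together with a careful extremal choice, for instance a bad chord whose interval between head and tail is minimal with respect to the inward-pointing tails. Confirming this existence claim, and handling any residual cases not covered by this direct construction, is where most of the real work of the proof should lie.
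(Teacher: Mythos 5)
Your reduction of the inductive step to the congruence $\det S_0 \equiv 2\ang{\C_1,G_0} \bmod 4$ is correct, and your base case agrees with the paper's. The gap is the existence claim on which the rest of your argument hinges: a bad crossing whose chord $\a$ has the property that every intersecting chord has its tail on the $b$-side need not exist. Concretely, take the based Gauss diagram with three mutually intersecting arrows $\a_1,\a_2,\a_3$ (heads $h_i$, tails $t_i$, arbitrary signs) whose endpoints occur in the cyclic order $b,\,h_1,\,t_2,\,h_3,\,t_1,\,h_2,\,t_3$. Each chord meets exactly two others, so every index is even and the diagram represents a checkerboard colorable virtual knot by Lemma~\ref{lem:index}. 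The bad arrows are $\a_1$ and $\a_3$; but $\a_2$ crosses $\a_1$ with its tail $t_2$ on the arc of $\a_1$ not containing $b$, and $\a_1$ crosses $\a_3$ with its tail $t_1$ on the arc of $\a_3$ not containing $b$. Hence no admissible bad crossing exists, the Eulerian condition notwithstanding, and no extremal choice of bad chord can rescue the construction.

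The paper therefore does not attempt to force $\det S_0=0$; it proves the mod~$4$ congruence $\det S_0\equiv\pm\nabla_\asc(G_0)(2)$ for an \emph{arbitrary} smoothing $G_0$, by a further induction on the number $k$ of arrows running from the non-basepointed component to the basepointed one. Your special situation is exactly the base case $k=0$ of that induction (Lemmas~\ref{lem:smoothing} and~\ref{lem:L1onL2}). The step $k>0$ performs a crossing change at an inter-component crossing (reducing $k$) together with a smoothing that produces a \emph{knot} diagram with one fewer real crossing, for which the mod~$8$ statement must already be known; this mutual recursion between the knot statement (mod~$8$) and the two-component statement (mod~$4$) is made well-founded by an outer induction on the number $n$ of real crossings, with your warping-degree induction and the $k$-induction nested inside. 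Without this second layer your induction cannot close.
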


\begin{proof}
For non-negative integers $n$, we introduce the following proposition $(\ast)_n$ which is a refinement of the statement above.
\begin{description}
\item[$(\ast)_n$] 
Let $D$ be a diagram of a checkerboard colorable oriented virtual knot with $n$ real crossings, $G$ the Gauss diagram corresponding to $D$ with an arbitrary basepoint, and $S$ a mock Seifert matrix for $K$.
Then, $\det S\equiv \pm\nabla_\asc(G)(2) \bmod 8$.
Moreover, let $D_0$ be a diagram of $2$-component link obtained by smoothing a real crossing of a diagram of a checkerboard colorable oriented virtual knot with $n+1$ real crossings, $G_0$ a based Gauss diagram of $D_0$, and $S_0$ a mock Seifert matrix for the link represented by $D_0$.
Then, $\det S_0\equiv \pm\nabla_\asc(G_0)(2) \bmod 4$.
\end{description}
We prove $(\ast)_n$ by induction on $n$.
First, $(\ast)_0$ is true since a virtual knot without real crossings is the unknot and the link obtained from a knot with a single real crossing by smoothing the crossing is a trivial link.

We next assume that $(\ast)_{n-1}$ is true and we prove
\begin{align}
\label{eq:mod8}
\det S\equiv \pm\nabla_\asc(G)(2) \bmod 8
\end{align}
by induction on the warping degree $d$ of $G$.
If $d=0$, then $D$ is descending, and thus \eqref{eq:mod8} holds by Lemmas~\ref{lem:det1} and \ref{lem:warp_zero}.
Assume that \eqref{eq:mod8} holds when the warping degree is less than $d$.
Let $D$ and $S$ be a diagram and matrix in $(\ast)_n$ and assume $d(G)=d$.

Let $\a$ be an arrow of $G$.
By smoothing $G$ along $\a$, we obtain diagrams $G'$ and $G_0$.
Since $d(G')=d-1$, one has $\det S'\equiv \varepsilon'\nabla_\asc(G')(2) \bmod 8$ for some $\varepsilon'\in\{\pm 1\}$.
By $(\ast)_{n-1}$, we have $\det S_0\equiv \pm\nabla_\asc(G_0)(2) \bmod 4$.
Let $\varepsilon$ be the sign of the crossing.
Using Lemma~\ref{lem:skein_even}, we have
\begin{align*}
\det S &= \det S' +\varepsilon 2\det S_0 \\
&\equiv \varepsilon'\nabla_\asc(G')(2) +2\varepsilon \nabla_\asc(G_0)(2) \mod 8 \\
&\equiv \varepsilon'\nabla_\asc(G')(2) +2\varepsilon' \varepsilon \nabla_\asc(G_0)(2) \mod 8 \\
&\equiv \varepsilon'\nabla_\asc(G)(2) \mod 8.
\end{align*}
Here the third equality follows from
\[
\nabla_\asc(G_0)(2) = \sum_{i\geq 1}\ang{\C_{2i-1},G_0}2^{2i-1} \equiv \varepsilon'\nabla_\asc(G_0)(2) \mod 4.
\]

We finally prove
\begin{align}
\label{eq:mod4}
\det S_0\equiv \pm\nabla_\asc(G_0)(2) \bmod 4
\end{align}
by induction on the number $k$ of arrows in $G$ from the component without basepoint to the other component.
If $k=0$, then one component of $D_0$ is above the other component at every real crossing, and thus \eqref{eq:mod4} holds by Lemmas~\ref{lem:smoothing} and \ref{lem:L1onL2}.
Assume \eqref{eq:mod4} holds less than $k$.
Let $S_0$ and $G_0$ be a matrix and diagram in $(\ast)_{n}$, respectively.
Let $\a$ be an arrow in $G$ from the component without basepoint to the other component.
Let $D_{0}'$ and $D_{0}^0$ be diagrams obtained by crossing change and smoothing at the crossing corresponding to $\a$.
By the induction hypothesis, we have $\det S_{0}'\equiv \varepsilon'\nabla_\asc(G_{0}')(2) \bmod 4$ for some $\varepsilon'\in \{\pm 1\}$.
By $(\ast)_{n-1}$, we have $\det S_{0}^0\equiv \nabla_\asc(G_{0}^0)(2) \mod 8$.
Using Theorem~\ref{thm:det_skein}, we have
\begin{align*}
\det S^{0} &= \det S_{0}' +\varepsilon 2\det S_{0}^0 \\
&\equiv \varepsilon'\nabla_\asc(G_{0}')(2) +2\varepsilon \nabla_\asc(G_{0}^0)(2) \mod 4 \\
&\equiv \varepsilon'\nabla_\asc(G_{0}')(2) +2\varepsilon' \varepsilon \nabla_\asc(G_{0}^0)(2) \mod 4 \\
&\equiv \varepsilon'\nabla_\asc(G_{0})(2) \mod 4,
\end{align*}
where the last equality follows from Lemma~\ref{lem:skein_odd}.
\end{proof}

Now, we divide Theorem~\ref{thm:main} into the following two theorems and give their proofs.

\begin{theorem}
\label{thm:C_prime}
Let $G$ be a based Gauss diagram of a mod $p$ almost classical oriented knot.
Then, $\ang{\C_2,G}\equiv \ang{\C'_2,G}\bmod p$.
\end{theorem}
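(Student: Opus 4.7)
The plan is to induct on the warping degree $d(G)$. The base case $d(G)=0$ is immediate: Lemma~\ref{lem:warp_zero} gives $\ang{\C_2,G}=\ang{\C'_2,G}=0$. For the inductive step with $d(G)>0$, I pick an arrow $\a$ of $G$ whose head precedes its tail (counterclockwise from $b$), perform a crossing change at $\a$ (flipping both its sign $\varepsilon_\a$ and its direction) to produce $G'$, and let $G_0$ denote the two-component Gauss diagram resulting from the oriented smoothing at $\a$.

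Two auxiliary observations support the induction. First, a crossing change preserves every index: for $\a$ itself, reversing $\varepsilon_\a$ is cancelled by the left/right swap induced by reversing the direction, leaving $I(\a)$ unchanged; for any other arrow $\b$ crossing $\a$, the contribution of $\a$ to $(r_+-r_-+l_--l_+)(\b)$ migrates between cells of $\{r_\pm(\b),l_\pm(\b)\}$ in a way that leaves the sum invariant. By Lemma~\ref{lem:index}, $G'$ is thus still mod $p$ almost classical. Second, since only $\a$ has changed from head-before-tail to tail-before-head, $d(G')=d(G)-1$, so the induction hypothesis gives $\ang{\C_2,G'}\equiv\ang{\C'_2,G'}\bmod p$.

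Applying Lemma~\ref{lem:skein_even} to $(G,G',G_0)$ (with $G$ and $G'$ in the roles of $G_\pm$ according to $\varepsilon_\a$) and subtracting the two resulting identities yields
\[
\bigl(\ang{\C_2,G}-\ang{\C'_2,G}\bigr)-\bigl(\ang{\C_2,G'}-\ang{\C'_2,G'}\bigr)=\pm\bigl(\ang{\C_1,G_0}-\ang{\C'_1,G_0}\bigr).
\]
To close the induction it suffices to show $\ang{\C_1,G_0}-\ang{\C'_1,G_0}\equiv 0\bmod p$. After smoothing, the two arcs of the circle of $G$ bounded by the endpoints of $\a$ become the two circles of $G_0$, and cross-chords of $G_0$ correspond bijectively, with signs, to chords of $G$ that intersected $\a$. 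Unpacking the definitions of $\C_1$ and $\C'_1$ and matching the basepoint side of $\a$ with either ``left'' or ``right'' (depending on which arc of $G\setminus\{\text{endpoints of }\a\}$ contains $b$), the signed count $\ang{\C_1,G_0}-\ang{\C'_1,G_0}$ coincides with $\pm(r_+-r_-+l_--l_+)=\pm\varepsilon_\a I(\a)$, which is $\equiv 0\bmod p$ by Lemma~\ref{lem:index}.

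I expect the main obstacle to be precisely this final bookkeeping: carefully coordinating the conventions for tail/head of $\a$, left/right relative to $\a$, and the basepoint-containing arc so that the signed count of cross-chord orientations in $G_0$ lines up with the expression $\pm\varepsilon_\a I(\a)$ with the intended overall sign. Everything else is essentially formal once the skein identities of Lemma~\ref{lem:skein_even}, the descending base case of Lemma~\ref{lem:warp_zero}, and the invariance of indices under crossing change are in hand.
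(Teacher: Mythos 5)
Your proof is correct and follows essentially the same route as the paper: the base case $d(G)=0$ via Lemma~\ref{lem:warp_zero}, then a crossing change at a warping arrow combined with Lemma~\ref{lem:skein_even}, reducing everything to the identification $\ang{\C_1,G_0}-\ang{\C'_1,G_0}=\pm\varepsilon_{\a} I(\a)\equiv 0\bmod p$. The only cosmetic differences are that the paper inducts on the pair $(n(G),d(G))$ in lexicographic order rather than on $d(G)$ alone, and it leaves implicit your (correct and genuinely needed) verification that a crossing change preserves all indices, so that the induction hypothesis applies to $G'$.
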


\begin{proof}
The proof is by induction on the pair $(n(G),d(G))$, where we write $n(G)$ for the number of arrows of $G$ in this proof.
In the case $(n(G),d(G))=(0,0)$, we see $\ang{\C_2,G}=\ang{\C'_2,G}=0$.
Assume that it holds for $G$ such that $(n(G),d(G))$ is smaller than a given $(n,d)$ in the lexicographic order.
Let $G$ be a Gauss diagram of a mod $p$ almost classical oriented knot with $(n(G),d(G)) = (n,d)$.
If $d(G)=0$, Lemma~\ref{lem:warp_zero} implies $\ang{\C_2,G}\equiv \ang{\C'_2,G}\bmod p$.

Consider the case $d(G)\geq 1$.
Then there exists an arrow $\a$ such that the head of $\a$ appears before the tail when we move on the circle from $b$.
Let $G_\a'$ (resp.\ $G_\a^0$) be the diagram obtained from $G$ by switching the orientation of $\a$ (resp.\ smoothing along $\a$).
By Lemma~\ref{lem:skein_even}, we have
\begin{align*}
\ang{\C_2, G} = \ang{\C_2, G_\a'}+\varepsilon \ang{\C_1, G_\a^0}, \\
\ang{\C'_2, G} = \ang{\C'_2, G_\a'}+\varepsilon \ang{\C'_1, G_\a^0}.
\end{align*}
for some $\varepsilon\in \{\pm 1\}$.
By the induction hypothesis and $I(\a)\equiv 0 \bmod p$, we conclude that $\ang{\C_2,G}\equiv \ang{\C'_2,G}\bmod p$.
\end{proof}

\begin{figure}[h]
 \centering \includegraphics[width=0.4\textwidth]{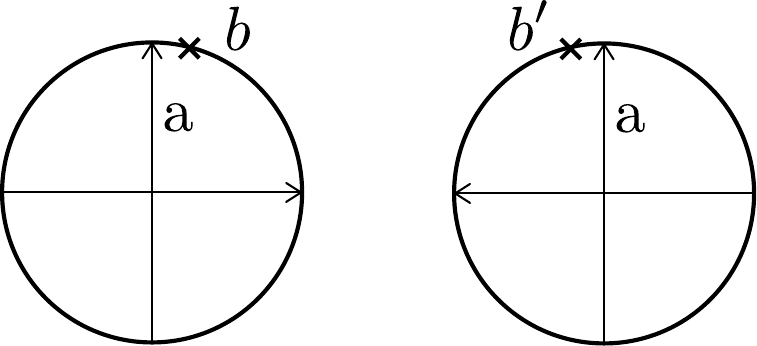}
 \caption{Ascending diagrams with basepoints $b$ and $b'$.}
 \label{fig:b_bprime}
\end{figure}

\begin{theorem}
\label{thm:basepoint}
Let $G$ be a Gauss diagram of a mod $p$ almost classical oriented knot.
Then $\ang{\C_2,G_b}\bmod p$ and $\ang{\C'_2,G_b}\bmod p$ do not depend on the choice of a basepoint $b$.
\end{theorem}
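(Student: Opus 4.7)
The plan is to use Theorem~\ref{thm:C_prime} to reduce the problem to the single statement that $\ang{\C_2, G_b}\bmod p$ is independent of $b$. Once that is known, applying Theorem~\ref{thm:C_prime} at both $b$ and $b'$ yields
\[
\ang{\C'_2, G_b} \equiv \ang{\C_2, G_b} \equiv \ang{\C_2, G_{b'}} \equiv \ang{\C'_2, G_{b'}} \bmod p,
\]
so the $\C'_2$ statement comes for free.

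For the $\C_2$ part, I would reduce to comparing basepoints $b$ and $b'$ that are separated on the circle by exactly one arrow endpoint; a general pair of basepoints is connected by a sequence of such elementary moves. Suppose the endpoint between $b$ and $b'$ belongs to an arrow $\alpha$ of $G$. Any two-arrow subdiagram not involving $\alpha$ has the same cyclic position relative to $b$ and to $b'$, so contributes equally to both pairings. Hence
\[
\ang{\C_2, G_b} - \ang{\C_2, G_{b'}} = \sum_{\beta \neq \alpha} \sgn(\alpha)\sgn(\beta)\bigl(\varepsilon_b(\alpha,\beta) - \varepsilon_{b'}(\alpha,\beta)\bigr),
\]
where $\varepsilon_b(\alpha,\beta)\in\{0,1\}$ indicates whether $\{\alpha,\beta\}$ realises the $\C_2$ pattern with basepoint $b$.

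Next I would classify each $\beta$ by (i) which of the two arcs cut out by the endpoints of $\alpha$ contains its tail and which contains its head, (ii) whether $\beta$ crosses $\alpha$, and (iii) the sign of $\beta$. Using the explicit shape of $\C_2$ from Figure~\ref{fig:Conway_combination}, one reads off the contribution $\varepsilon_b(\alpha,\beta) - \varepsilon_{b'}(\alpha,\beta)$ for each type. After collecting terms, the difference simplifies, up to sign, to $\sgn(\alpha)\bigl(r_+ - r_- + l_- - l_+\bigr)$ with $r_\pm, l_\pm$ as in Definition~\ref{def:index} for $\alpha$, which is precisely $I(\alpha)$. Since $G$ represents a mod $p$ almost classical knot, Lemma~\ref{lem:index} gives $I(\alpha)\equiv 0\bmod p$, so the elementary basepoint move preserves $\ang{\C_2, G_b}$ modulo $p$, and iterating completes the proof.

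The main obstacle is the combinatorial bookkeeping: one must verify that the signed changes across all types of $\beta$ telescope precisely to $I(\alpha)$ rather than some other linear combination of $r_\pm$ and $l_\pm$. The analysis splits naturally into the two essentially dual situations of moving the basepoint past the head of $\alpha$ versus past its tail, and in each situation a picture with $\alpha$ fixed and the basepoint pushed past every type of $\beta$ makes the cancellations transparent.
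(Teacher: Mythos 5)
Your proposal is correct and takes essentially the same route as the paper: both reduce to the $\C_2$ statement via Theorem~\ref{thm:C_prime}, move the basepoint past one arrow endpoint at a time, note that only two-arrow subdiagrams containing that arrow $\a$ can change, and identify the resulting signed difference with $I(\a)$, which vanishes mod $p$ by Lemma~\ref{lem:index}. The head-versus-tail split you describe is exactly the paper's cases (I) and (II), where in the tail case the contribution of subdiagrams containing $\a$ simply vanishes.
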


\begin{proof}
By Theorem~\ref{thm:C_prime}, we need to discuss only $\ang{\C_2,G_b}\bmod p$.
It suffices to show that $\ang{\C_2,G_b}\equiv \ang{\C_2,G_{b'}}\bmod 2$, where the basepoint $b$, an endpoint of an arrow $\a$, and $b'$ are aligned in this order on the oriented circle.
We use the same symbols $r_\pm$ and $l_\pm$ as Definition~\ref{def:index} with respect to $\a$ with sign $\varepsilon$.
There are two cases where the above endpoint of $\a$ between $b$ and $b'$ is (I) the head or (II) the tail of $\a$.

In the case (I), the contribution of subdiagrams containing $\a$ in $\ang{\C_2,G_b}$ (resp.\ $\ang{\C_2,G_{b'}}$) is $\varepsilon(r_+ - r_-)$ (resp.\ $\varepsilon(l_+ - l_-)$) as drawn in Figure~\ref{fig:b_bprime}.
They are the same modulo $p$ by Lemma~\ref{lem:index}.
Since the contribution of subdiagrams which do not contain $\a$ in $\ang{\C_2,G_b}$ is the same as that of $\ang{\C_2,G_{b'}}$, we conclude that $\ang{\C_2,G_b}\equiv \ang{\C_2,G_{b'}}\bmod 2$.

In the case (II), there is no subdiagram containing $\a$ which contributes to $\ang{\C_2,G_b}$.
Therefore, we complete the proof.
\end{proof}

\begin{proof}[Proof of Corollary~\ref{cor:det}]
Let $G$ be a based Gauss diagram of a checkerboard colorable virtual knot $K$.
It follows from Theorems~\ref{thm:det} and \ref{thm:main} that
\begin{align*}
\det K &= \pm\nabla_\asc(G)(2) 
= \pm\sum_{i\geq 0}\ang{\C_{2i},G}2^{2i} \\
&\equiv \pm 1+\ang{\C_{2},G}4 
\equiv \pm 1+4v_2(K) \mod 8.
\end{align*}
This completes the proof.
\end{proof}

Theorem~\ref{thm:main} and Corollary~\ref{cor:det} reveal important properties of the second-degree term of the ascending/descending polynomial for checkerboard colorable virtual knots.
On the other hand, higher degree terms of the ascending/descending polynomial are not necessarily well known.
It is natural to investigate their properties and applications.


\begin{thebibliography}{10}

\bibitem{BCK22}
H.~U. Boden, M.~Chrisman, and H.~Karimi.
\newblock The {G}ordon-{L}itherland pairing for links in thickened surfaces.
\newblock {\em Internat. J. Math.}, 33(10-11):Paper No. 2250078, 47, 2022.

\bibitem{BGHNW17}
H.~U. Boden, R.~Gaudreau, E.~Harper, A.~J. Nicas, and L.~White.
\newblock Virtual knot groups and almost classical knots.
\newblock {\em Fund. Math.}, 238(2):101--142, 2017.

\bibitem{BoKa22}
H.~U. Boden and H.~Karimi.
\newblock Classical results for alternating virtual links.
\newblock {\em New York J. Math.}, 28:1372--1398, 2022.

\bibitem{BoKa23}
H.~U. Boden and H.~Karimi.
\newblock Mock Seifert matrices and unoriented algebraic concordance.
\newblock arXiv:2301.05946v3, 2023.

\bibitem{BoKa24}
H.~U. Boden and H.~Karimi.
\newblock Concordance invariants of null-homologous knots in thickened
  surfaces.
\newblock {\em Communications in Analysis and Geometry}, 32(8):2271--2304,
  2024.

\bibitem{CDM12}
S.~Chmutov, S.~Duzhin, and J.~Mostovoy.
\newblock {\em Introduction to {V}assiliev knot invariants}.
\newblock Cambridge University Press, Cambridge, 2012.

\bibitem{CKR09}
S.~Chmutov, M.~C. Khoury, and A.~Rossi.
\newblock Polyak-{V}iro formulas for coefficients of the {C}onway polynomial.
\newblock {\em J. Knot Theory Ramifications}, 18(6):773--783, 2009.

\bibitem{ChPa07}
S.~Chmutov and I.~Pak.
\newblock The {K}auffman bracket of virtual links and the
  {B}ollob\'{a}s-{R}iordan polynomial.
\newblock {\em Mosc. Math. J.}, 7(3):409--418, 573, 2007.

\bibitem{Chr19}
M.~Chrisman.
\newblock Virtual {S}eifert surfaces.
\newblock {\em J. Knot Theory Ramifications}, 28(6):1950039, 33, 2019.

\bibitem{ChMu23}
M.~Chrisman and S.~Mukherjee.
\newblock Algebraic concordance order of almost classical knots.
\newblock {\em J. Knot Theory Ramifications}, 32(11):Paper No. 2350072, 34,
  2023.

\bibitem{Gil82}
C.~A. Giller.
\newblock A family of links and the {C}onway calculus.
\newblock {\em Trans. Amer. Math. Soc.}, 270(1):75--109, 1982.

\bibitem{Green}
J.~Green.
\newblock A table of virtual knots.
\newblock \url{https://www.math.toronto.edu/drorbn/Students/GreenJ/}.

\bibitem{Kam02}
N.~Kamada.
\newblock On the {J}ones polynomials of checkerboard colorable virtual links.
\newblock {\em Osaka J. Math.}, 39(2):325--333, 2002.

\bibitem{Kau99}
L.~H. Kauffman.
\newblock Virtual knot theory.
\newblock {\em European J. Combin.}, 20(7):663--690, 1999.

\bibitem{Mur69}
K.~Murasugi.
\newblock The arf invariant for knot types.
\newblock {\em Proceedings of the American Mathematical Society}, 21(1):69--72,
  Apr. 1969.

\bibitem{NNST12}
T.~Nakamura, Y.~Nakanishi, S.~Satoh, and Y.~Tomiyama.
\newblock Twin groups of virtual 2-bridge knots and almost classical knots.
\newblock {\em J. Knot Theory Ramifications}, 21(10):1250095, 18, 2012.

\bibitem{Sat18}
S.~Satoh.
\newblock Crossing changes, delta moves and sharp moves on welded knots.
\newblock {\em Rocky Mountain J. Math.}, 48(3):967--979, 2018.

\bibitem{SaTa14}
S.~Satoh and K.~Taniguchi.
\newblock The writhes of a virtual knot.
\newblock {\em Fund. Math.}, 225(1):327--342, 2014.

\bibitem{Shi10}
A.~Shimizu.
\newblock The warping degree of a knot diagram.
\newblock {\em J. Knot Theory Ramifications}, 19(7):849--857, 2010.

\bibitem{SiWi06}
D.~S. Silver and S.~G. Williams.
\newblock Crowell's derived group and twisted polynomials.
\newblock {\em J. Knot Theory Ramifications}, 15(8):1079--1094, 2006.

\end{thebibliography}
\end{document}